
\documentclass[reqno,12pt]{amsart}
\usepackage{a4,latexsym,amssymb,amsfonts}

\usepackage{amsmath,amscd} 

\usepackage{
pdfsync,
verbatim} 

\usepackage{enumerate}
\numberwithin{equation}{section}
\usepackage{amsthm}

\addtolength{\headheight}{3.2pt}    
\textwidth=16cm 
\textheight=22cm
\oddsidemargin=-0.25cm
\evensidemargin=-0.25cm

\newtheorem{theorem}{Theorem}[section]
\newtheorem{lemma}[theorem]{Lemma}
\newtheorem{proposition}[theorem]{Proposition}

\newtheorem{definition}[theorem]{Definition}

\theoremstyle{definition}
\newtheorem{remark}[theorem]{Remark}

\newcommand\II{{\mathbb{I}}}
\newcommand\RR{{\mathbb{R}}}
\newcommand\CC{{\mathbb{C}}}
\newcommand\NN{{\mathbb{N}}}
\newcommand\ZZ{{\mathbb{Z}}}
\newcommand\HH{{\mathbb{H}}}
\newcommand\JJ{{\mathbb{J}}}

\newcommand\cP{{\mathcal{P}}}

\newcommand\cS{{\mathcal{S}}}

\date{today}


\begin{document}

\title[A Restriction Theorem for M\'etivier
Groups]{A Restriction Theorem for  M\'etivier Groups}
\author{Valentina Casarino and Paolo Ciatti}
\address{DTG,
Universit\`a degli Studi di Padova\\
Stradella san Nicola 3\\
I-36100 Vicenza}
\email{valentina.casarino@unipd.it}
\address{
Dicea, Universit\`a degli Studi di Padova\\
via Marzolo 9\\ I-35131 Padova}
\email{paolo.ciatti@unipd.it}
\thanks{It is a pleasure to thank the anonymous
referees for carefully reading the manuscript  and for providing 
thoughtful and 
constructive comments.}
\keywords{Restriction theorem, two-step nilpotent groups, M\'etivier groups, sublaplacian, twisted Laplacian.}
\subjclass{22E25, 22E30, 43A80}
\date{\today}

\maketitle

\begin{abstract}
In the spirit of an
earlier result of D. M\"uller
on the Heisenberg group
we prove a restriction theorem on a
certain class of two step nilpotent
Lie groups.
Our result extends
that of M\"uller also
in the framework of the 
Heisenberg group.
\end{abstract}

\date{today}

\section{Introduction}

In this paper we examine the mapping properties  between Lebesgue spaces of the operators arising in the spectral resolution of the sublaplacian
on the class of groups 
introduced by G. M\'etivier  in \cite{Me}.
These are two-step nilpotent 
Lie groups, characterized by the property that
the quotients with respect to the hyperplanes
contained in the centre are
Heisenberg groups.
The groups of $H$-type, introduced
by A. Kaplan \cite{K}, 
are examples of groups
satisfying the M\'etivier property,
but there are M\'etivier groups
which are not of $H$-type
(for an example see \cite{MuS}).

Let $G$ be a  M\'etivier group
equipped with a sublaplacian $L$.
The operators
$\mathcal P^L_{\mu}$,
we are interested in, are formally given by a Dirac delta $\delta_{\mu}(L)$
at a point ${\mu}$ of the spectrum of $L$.
$\mathcal P^L_{\mu}$ corresponds to a ``generalized projection operator"
in the  sense introduced by R. Strichartz in  \cite{Str}: its range consists of eigenfunctions of $L$,
and the family 
$\{\mathcal P^L_{\mu}\}$
decomposes $f$, in the sense that
for all Schwartz functions $f$ on $G$
$$f=\int_0^{+\infty}
\mathcal P^L_{\mu}f\,d\mu \,,$$
where the integral converges in the sense of distributions,
as we shall prove in Theorem \ref{risoluzione-Strichartz}.

On the Euclidean
space $\mathbb R^{d}$
the spectral resolution of 
the Laplacian $\Delta = - \partial^2_1 -
\cdots - \partial_d^2$
may be given in terms of convolutions
with the Fourier transform of
the measures $d \sigma_{r}$, induced on the spheres 
centered at the origin by the Lebesgue
measure, since
$\Delta f * \widehat {\sigma_{r}}
= r^{2} f * \widehat {\sigma_{r}}$.
The celebrated
Stein-Tomas theorem
\cite[Ch.~9]{Stein}
describes the mapping properties
of 
the convolution operator 
with $\widehat{\sigma_{r}}$. Throughout the paper, we adopt the following notation
\begin{equation}\label{punto-critico}
p_*(d):= 2 \frac{d+1}{d+3}\,, \,\,d\in\NN\,.
\end{equation}
\begin{theorem} [Stein-Tomas
Restriction Inequality]
Suppose that $1 \leq p \leq p_*(d)$ and let
$\frac 1 p + \frac 1{p'} = 1$.
Then the estimate
\begin{equation}\label{PeterTomas}
\| f * \widehat{\sigma_{r}}\|_{p'}
\leq C_{r} \| f \|_{p}
\end{equation}
holds for
all Schwartz functions on $\mathbb R^{d}$
and all $r >0$.
\end{theorem}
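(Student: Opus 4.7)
The plan is to reduce to $r = 1$ by dilation and then apply Stein's analytic interpolation to a family of convolution operators that interpolates between the desired one and endpoint operators which are easy to estimate. First, by the rescaling $x \mapsto x/r$, inequality \eqref{PeterTomas} for $\sigma_r$ reduces to the same estimate for the unit sphere measure $\sigma := \sigma_1$, with the constant $C_r$ depending only on a fixed power of $r$; henceforth I assume $r=1$.

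Next I would introduce the meromorphic family of tempered distributions
\[
\nu_z(\xi) := \frac{e^{z^2}}{\Gamma(z+1)}\,(1-|\xi|^2)_+^z,
\]
defined for $\mathrm{Re}(z) > -1$ and continued analytically in $z$, so that $\nu_{-1}$ is a constant multiple of $\sigma$. Setting $T_z f := f * \widehat{\nu_z}$, the target operator is essentially $T_{-1}$. Expressing $\widehat{\nu_z}$ via Bessel functions and invoking their asymptotic expansion yields the pointwise bound
\[
\bigl|\widehat{\nu_z}(x)\bigr| \le C\,(1+|x|)^{-\frac{d+1}{2} - \mathrm{Re}(z)},
\]
uniform in $|\mathrm{Im}(z)|$ up to the admissible growth afforded by the factor $e^{z^2}$.

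The two endpoint estimates required for Stein's interpolation then follow: on $\mathrm{Re}(z) = 0$ the multiplier $\nu_z$ is bounded, so Plancherel gives $T_z\colon L^2 \to L^2$; on $\mathrm{Re}(z) = -\tfrac{d+1}{2}$ the kernel $\widehat{\nu_z}$ is bounded, so Young's inequality gives $T_z\colon L^1 \to L^\infty$. Stein's analytic interpolation theorem applied at the convex combination $z = -1$, corresponding to $\theta = 2/(d+1)$, yields $T_{-1}\colon L^{p_*(d)} \to L^{p_*(d)'}$. The remaining range $1 \le p < p_*(d)$ is then obtained by interpolating this endpoint with the trivial bound $\|f * \widehat{\sigma}\|_\infty \le \|\widehat{\sigma}\|_\infty \,\|f\|_1$, which uses only that $\widehat{\sigma}$ is bounded.

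The main obstacle is the asymptotic analysis of $\widehat{\nu_z}$ along vertical strips: one must establish the stated pointwise bound with constants of admissible growth in $|\mathrm{Im}(z)|$, a step that relies on classical but delicate Bessel function estimates and is the only place where the curvature of the sphere enters the argument in an essential way.
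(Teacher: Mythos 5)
The paper states this classical theorem without proof, citing Chapter~9 of Stein's monograph, so there is no proof in the paper to compare against. Your sketch reproduces the standard complex-interpolation argument given in that reference, and the steps you outline --- the analytic family $\nu_z$, the $L^2\to L^2$ endpoint on $\mathrm{Re}\,z=0$, the $L^1\to L^\infty$ endpoint on $\mathrm{Re}\,z=-(d+1)/2$ via Bessel asymptotics, Stein interpolation at $z=-1$ to reach $p_*(d)=2(d+1)/(d+3)$, and the trivial interpolation to cover the remaining range $1\le p\le p_*(d)$ --- are all correct.
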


According to the Knapp
example \cite{Stein},  estimate \eqref{PeterTomas}
fails  if $p>p_{*}
 $.

Strichartz suggested
to study the boundedness properties
of the operators arising in the spectral resolution
of other Laplacians. 
D. M\"uller,
motivated also by the  works
of C. Sogge on the spectral projections of the
Laplace-Beltrami operator
on compact Riemannian
manifolds \cite{So1}, \cite{So2}, 
proved
an analogue 
of the Stein-Tomas theorem
for the sublaplacian on the  Heisenberg group
\cite{Mu}.

\begin{theorem}
\label{annals}[D. M\"uller]
Let $1 \leq p \leq 2$
and  $\frac 1 p + \frac 1{p'} = 1$.
The inequality
\begin{equation}
\label{teorema di muller}
\| \mathcal P^L_{\mu} f\|_{L^{\infty}_{t}L^{p'}_{z}}
\leq
C_{\mu}
\|f\|_{L^{1}_{t}L^{p}_{z}},
\end{equation}
holds
for all Schwartz functions on $\mathbb H^n$
and all $\rho >0$.
\end{theorem}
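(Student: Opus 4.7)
The natural strategy is to diagonalize $L$ via the partial Fourier transform in the central variable $t$, which turns $L$ into the family of twisted Laplacians $L_{\lambda}$ on $\RR^{2n}$, where $\lambda$ is dual to $t$. Each $L_{\lambda}$ has pure point spectrum $\{(2k+n)|\lambda|\}_{k\ge 0}$ with orthogonal spectral projections $P_{k}^{\lambda}$ realized by twisted convolution with Laguerre functions. Inserting the identity $\delta_{\mu}(L_{\lambda}) = \sum_{k\ge 0}\delta((2k+n)|\lambda| - \mu)\, P_{k}^{\lambda}$, inverting in $\lambda$ and integrating out the delta functions yields the formal representation
\begin{equation*}
\cP^{L}_{\mu} f(t,z) = \frac{1}{2\pi}\sum_{k\ge 0}\sum_{\epsilon=\pm 1} \frac{1}{2k+n}\, e^{i\epsilon\mu t/(2k+n)}\, \bigl( P_{k}^{\epsilon\mu/(2k+n)}\, \widehat{f}^{\,t}(\epsilon\mu/(2k+n),\cdot)\bigr)(z),
\end{equation*}
where $\widehat{f}^{\,t}(\lambda,z)$ denotes the partial Fourier transform of $f$ in $t$. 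The distributional meaning of this representation is furnished by Theorem \ref{risoluzione-Strichartz}.

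From the identity above I would bound $\|\cP^{L}_{\mu}f(t,\cdot)\|_{L^{p'}_{z}}$ by the sum of the absolute values of each term, which no longer depends on $t$, and then take the supremum in $t$. Two ingredients should then close the estimate. The first is Minkowski's inequality, which gives
\[
\|\widehat{f}^{\,t}(\lambda,\cdot)\|_{L^{p}_{z}} \le \|f\|_{L^{1}_{t}L^{p}_{z}}
\]
uniformly in $\lambda$. The second, and main, ingredient is a sharp bound for the twisted Laplacian projections,
\[
\|P_{k}^{\lambda}g\|_{L^{p'}(\RR^{2n})} \le C_{p}\,(2k+n)^{\alpha(p)}\,|\lambda|^{-n(\frac{1}{p}-\frac{1}{p'})}\,\|g\|_{L^{p}(\RR^{2n})},
\]
which follows from the explicit Laguerre kernel: an $L^{\infty}$ estimate supplies the endpoint $p=1$, orthogonality supplies the endpoint $p=2$, and Riesz--Thorin interpolation delivers the intermediate exponents with the sharp $\alpha(p)$.

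The crux of the argument, and the step I would expect to be most delicate, is to verify that after substituting $\lambda = \mu/(2k+n)$ and combining with the $1/(2k+n)$ prefactor the resulting series in $k$ is convergent. The powers of $(2k+n)$ coming from $|\lambda|^{-n(1/p-1/p')}$, from $(2k+n)^{\alpha(p)}$ and from the $1/(2k+n)$ factor must aggregate to a strictly sub-harmonic exponent, and a direct computation should show that this happens exactly in the range $1\le p\le 2$. This algebraic threshold is what pins the endpoint in the theorem, in contrast to the smaller Stein--Tomas range $p\le p_{*}(2n+1)$ valid on $\RR^{2n+1}$, the improvement being bought by the asymmetry of the mixed norms on the left- and right-hand sides. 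Sharpness of $p\le 2$ is confirmed by a Knapp-type example adapted to $\HH^{n}$.
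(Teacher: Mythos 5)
Your overall architecture — partial Fourier transform in $t$, decomposition of $\cP^L_\mu$ over the spectral projections $\Lambda_k^{\mu/(2k+n)}$ of the twisted Laplacian, trading $L^1_t$ on the right for $L^\infty_t$ on the left by Minkowski, then summing operator norms in $k$ — is exactly the mechanism in Proposition \ref{stima astratta} and its sequel in Section~2 (the paper itself does not re-prove Theorem~\ref{annals}, which is cited from [Mu]; the machinery of Section~2 is aimed at the stronger Theorem~\ref{muller improved 1}). So the strategy is sound and matches the paper.

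Two steps in your outline do not hold as written. First, the claim that Riesz--Thorin interpolation between the $L^1\to L^\infty$ kernel bound and the $L^2\to L^2$ Plancherel bound delivers the \emph{sharp} $\alpha(p)$ is false: the sharp bound is the piecewise-affine Koch--Ricci estimate \eqref{sogge 1}, whose exponent $\gamma(1/p)$ is strictly below the linear interpolant for $1<p\le 2$ and in fact \emph{negative} for $p>p_*(2n)$, which convex interpolation from a positive endpoint can never produce. This loss is harmless for the diagonal $(p,p')$ estimate of Theorem~\ref{annals} — with the interpolated bound the series becomes $\sum_k(2k+n)^{-2/p}$, which converges for $1\le p<2$ (and diverges at $p=2$, consistent with the exclusion of $(p,q)=(2,2)$ in Theorem~\ref{muller improved 1}) — but it is exactly what the paper's improvement in Section 2 needs to overcome. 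Second, your stated $\lambda$-dependence $|\lambda|^{-n(1/p-1/p')}$ has the wrong sign: Lemma~\ref{lemmanormaPirok} gives $\lambda^{n(1/p-1/q-1)}$ for $\Lambda_k^\lambda$, and once the Plancherel weight $|\lambda|^n$ (which is missing from your displayed formula for $\cP^L_\mu f$) is reinstated, the combined power is $\lambda^{+n(1/p-1/p')}$. Since $\lambda_k=\mu/(2k+n)\to 0$, the sign is decisive — your exponent, taken literally, makes the $k$-series diverge for every $p<2$. You deferred the computation, so this is plausibly a slip, but as written the argument does not close; it needs the corrected scaling to recover the convergence threshold $p<2$.
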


The theorem is stated in terms of the mixed
Lebesgue norms
\begin{equation}
\label{norme di muller}
\|f\|_{L^{r}_{t}L^{p}_{z}}
=
\left(
\int\limits_{\mathbb C^{n}}
\left(
\int\limits_{-\infty}^{\infty}
| f(z,t)|^{r}
dt
\right)^{\frac p r} dz
\right)^{\frac 1p},
\qquad
1 \leq p,r < \infty,
\end{equation}
(with the obvious modifications
when $p$ or $r$
are equal to $\infty$),
since, as shown by M\"uller, the only
available estimate between
$L^p$ spaces on $\mathbb H^n$ is the trivial
$L^{1}-L^{\infty}$ one.
In addition, a
counterexample produced
in \cite{Mu} shows that $ \mathcal P^L_{\mu}$ is unbounded as an operator between 
${L^{r}_{t}L^{p}_{z}}$
and ${L^{r'}_{t}L^{p'}_{z}}$,
unless $r=1$.
The main reason for that is that
the operators $\mathcal P^L_{\mu}$
operate on the $t$ variable
through the Fourier transform,
but the Heisenberg group has
one dimensional centre and there
are no nontrivial restriction estimates
for the Fourier transform on
the real line.
Indeed, S. Thangavelu proved in \cite{Th} that the inequality
$$
 \| \mathcal P^L_{\mu} f\|_{L^{p'}(G)}
 \leq
C \|  f\|_{L^{p}(G)}
$$
holds for $1 \leq p \leq p_*(n)$
on the direct product $G$ of $n$
copies of the
three dimensional Heisenberg group $\mathbb H_{1}$.

We extend 
M\"uller's theorem in two ways.
First, since the dimension of the centre of  
 M\'etivier  groups is
in general 
bigger than one
(actually, in the M\'etivier class only the Heisenberg groups have a one dimensional centre),
we incorporate
the Stein-Tomas theorem
in the estimate concerning
the central variables.
Secondly, we  
improve \eqref{teorema di muller}
by replacing on the left-hand side $p'$
with an exponent $q< p'$.
More precisely, we will prove the following
result.

\begin{theorem}
\label{nostro}
Let $G$ be a M\'etivier  group,
with Lie algebra $\mathfrak{g}$.
Let $\mathfrak{z}$ and
 $\mathfrak{v}$ 
 denote, respectively, the centre of $\mathfrak{g}$ and 
its orthogonal complement.
\\
If
$\dim \mathfrak{z}=d$ and $\dim \mathfrak{v}=2n$,
and if  $1\le r\leq p_{*}(d) 
$, 
then for all $p, q$ satisfying $1\leq p\leq 2 \leq q \leq \infty$ and
for all Schwartz functions
$f$, we have
\begin{equation}\label{secondainterpolata 1}
\|\mathcal{P}^L_\mu  f\|_{L^{r'}(\mathfrak{z}) L^q(\mathfrak{v}) }
\le C
\mu^{d(\frac{2}{r}-1)+n (\frac{1}{p}-\frac1q)-1 }
\|f\|_{ L^r (\mathfrak{z})  L^p (\mathfrak{v})  }\,,\,\,\mu>0\,.
\end{equation}
\end{theorem}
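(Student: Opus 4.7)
The plan is to diagonalise $L$ by the partial Fourier transform in the central variable. For each $\lambda \in \mathfrak{z}^*\setminus\{0\}$, the $\lambda$-slice of $L$ is a twisted Laplacian $L_\lambda$ on $\mathfrak{v}$ whose non-degeneracy encodes the M\'etivier hypothesis; its pure point spectrum is $\{\mu_k(\lambda) = \sum_{j=1}^{n}(2k_j+1)\,d_j(\lambda) : k\in\NN^n\}$, where $\pm i\,d_j(\lambda)$ are the eigenvalues of the skew-symmetric form $J_\lambda$, each $d_j$ homogeneous of degree one in $\lambda$. Denoting by $\widetilde{f}(z,\lambda)$ the partial Fourier transform of $f$ in the central variable and by $P^\lambda_k$ the orthogonal projection of $L^2(\mathfrak{v})$ onto the $\mu_k(\lambda)$-eigenspace of $L_\lambda$, one arrives at
\begin{equation*}
\mathcal{P}^L_\mu f(z,t) = \frac{1}{(2\pi)^{d}} \int_{\mathfrak{z}^*} e^{i\lambda\cdot t} \sum_{k\in\NN^{n}} \delta\bigl(\mu - \mu_k(\lambda)\bigr)\, \bigl(P^\lambda_k\widetilde{f}(\cdot,\lambda)\bigr)(z)\, d\lambda,
\end{equation*}
which decouples the argument into a $\mathfrak{v}$-piece (the projections $P^\lambda_k$) and a $\mathfrak{z}^*$-piece (the Dirac mass concentrated on the level sets $\Sigma_{k,\mu}=\{\mu_k(\lambda)=\mu\}$).

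For the $\mathfrak{v}$-piece I would use the twisted-Laplacian projection bounds underpinning \cite{Mu}, namely
\begin{equation*}
\|P^\lambda_k g\|_{L^{q}(\mathfrak{v})} \le C\, |\lambda|^{n(1/p - 1/q)}\, \|g\|_{L^{p}(\mathfrak{v})}, \qquad 1\le p\le 2\le q\le\infty,
\end{equation*}
which carry over to $L_\lambda$ in the M\'etivier setting by a rescaling in $\lambda$; on $\Sigma_{k,\mu}$, the non-degeneracy of $J_\lambda$ forces $|\lambda|\asymp \mu/(|k|+1)$, turning the $|\lambda|$-factor into $\mu^{n(1/p-1/q)}$ modulo a $k$-dependent term. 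For the $\mathfrak{z}^*$-piece, the degree-one homogeneity of $\mu_k$ realises $\Sigma_{k,\mu}=\mu\,\Sigma_{k,1}$ as a dilate of a smooth compact hypersurface with non-vanishing Gaussian curvature (again thanks to M\'etivier non-degeneracy). Stein--Tomas therefore gives, for $1\le r\le p_*(d)$,
\begin{equation*}
\Bigl\|\int_{\mathfrak{z}^*} h(\lambda)\, \delta\bigl(\mu - \mu_k(\lambda)\bigr)\, e^{i\lambda\cdot t}\, d\lambda\Bigr\|_{L^{r'}_{t}(\mathfrak{z})} \le C\, \mu^{d(2/r - 1) - 1}\, \|h\|_{L^{r}_\lambda(\mathfrak{z}^*)},
\end{equation*}
the factor $\mu^{-1}$ coming from the Jacobian that relates the Dirac mass to surface measure. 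I would then combine the two by Minkowski's integral inequality (admissible since $p\le q$ and $r\le r'$), apply the $\mathfrak{v}$-bound pointwise in $\lambda$ before the $\mathfrak{z}^*$-bound, and replace $\widetilde{f}$ by $f$ via Hausdorff--Young; the resulting power of $\mu$ is exactly $d(2/r-1)+n(1/p-1/q)-1$.

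The principal obstacle I expect lies in the uniformity in the Landau index $k$. In the $H$-type special case one has $d_j(\lambda)\equiv|\lambda|$, so $\Sigma_{k,1}$ collapses to a single Euclidean sphere and all constants are transparent; for a general M\'etivier group the surfaces $\Sigma_{k,1}$ vary with $k$, and both their principal curvatures (controlling the Stein--Tomas constant) and the Laguerre-projection bounds must be shown to depend on $k$ in a summable fashion. This requires exploiting continuity of the normalised $d_j(\lambda)/|\lambda|$ on $\lambda\in S^{d-1}$ together with the uniform lower bound $d_j(\lambda)\ge c|\lambda|$ forced by the M\'etivier property. Once this $k$-summability is in place, the individual Landau-level contributions can be summed to yield the claimed estimate.
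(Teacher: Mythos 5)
Your proposal takes a genuinely different route from the paper, and the difference matters. The paper never examines the level sets of the eigenvalue function in $\mathfrak{z}^*$. Instead it performs a partial Radon transform in the central variables: for each direction $\omega\in S$ the quotient $G_\omega$ is a Heisenberg group, and after a linear change of variables $A_\omega$ on $\mathfrak{v}$ normalizing the symplectic form $B_\omega$, the spectral analysis is reduced to the standard Laguerre projections $\Lambda_k$ on the Heisenberg group. The resolution of $L$ then appears as a sum over the scalar Landau index $k$ of integrals over the \emph{fixed} unit sphere $S\subset\mathfrak{z}^*$, and Stein--Tomas is invoked only on that fixed sphere; the $k$-dependence enters solely through the radius $\mu_k=\mu/(2k+n)$ and through the Koch--Ricci bounds $\|\Lambda_k\|_{L^p\to L^q}$, whose summability is checked by an explicit lemma. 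The entire point of the Radon transform step is to avoid ever confronting the geometry of the level sets $\Sigma_{k,\mu}$ you are trying to work with.

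Your plan keeps the multi-index $k\in\NN^n$ and the anisotropic eigenvalue $\mu_k(\lambda)=\sum_j(2k_j+1)d_j(\lambda)$, and proposes to apply Stein--Tomas on $\Sigma_{k,\mu}$ directly. The gap you flag is the crux, and you have not filled it: the assertion that $\Sigma_{k,1}$ has non-vanishing Gaussian curvature ``thanks to M\'etivier non-degeneracy'' is not a theorem. Non-degeneracy of $B_\omega$ gives $d_j(\lambda)\geq c|\lambda|$, hence $\mu_k(\lambda)>0$ for $\lambda\neq 0$, so $\Sigma_{k,\mu}$ is a compact star-shaped hypersurface; but this says nothing about its second fundamental form. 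Worse, the singular values $d_j(\lambda)$ of the skew form $B_\lambda$ are in general only continuous (they lose smoothness at points where multiplicities change), so the $\Sigma_{k,\mu}$ need not even be $C^2$, let alone have curvature bounds uniform over $k\in\NN^n$. The continuity-plus-lower-bound argument you sketch controls the scale $|\lambda|\asymp\mu/(|k|+n)$, not the curvature, and the Stein--Tomas constants for the rescaled surfaces $(|k|+n)\Sigma_{k,1}$ could a priori blow up along subsequences of $k$. Unless you can prove a uniform curvature (or at least a uniform restriction) estimate for this family of surfaces, the $k$-sum cannot be controlled and the argument does not close; the paper's Radon-transform reduction is precisely what makes this problem disappear.
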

Here $C$ is independent of $f$ and
$\mu$ and the definition of the norms
is analogous to that given  in the Heisenberg framework.

To explain the strategy, we recall
that
the operators $\mathcal P^L_{\mu}$
are given by
 the action in $z$ of the spectral projections of the twisted Laplacian, conjugated with
the Fourier transform in the central
variable.
The twisted Laplacian is a second order elliptic differential operator
 on $\mathbb C^{n}$ with
point spectrum.
The estimates of the norms
between Lebesgue spaces of
its spectral projections, which are
 an essential ingredient 
in the proof of \eqref{secondainterpolata 1},
 have been progressively strengthened
in the last twenty years
(see for instance \cite{RRTh}, \cite{SZ});
eventually the sharp $L^{p}-L^{2}$
bounds have been attained by H. Koch and F. Ricci \cite{KR}
(see also \cite{CaCia} for a different proof).
By incorporating in our
 argument the optimal
estimates
we obtain a result,
which improves on  that of
M\"uller 
also on the Heisenberg group (in fact,
for a Schwartz function $f$ on $\mathbb H^{n}$ we prove that
\begin{equation*}
\| \mathcal P^L_{\mu} f\|_{L^{\infty}_{t}L^{2}_{z}}
\leq
C_{\mu}
\|f\|_{L^{1}_{t}L^{p}_{z}},
\end{equation*}
for all $1 \leq p \leq 2$).

It is worth noticing
that from the estimates for the operators
$\mathcal{P}^L_\mu$ one can deduce 
estimates for
the standard spectral projections
of the sublaplacian, which could be  used to prove $L^p$
summability results for Bochner-Riesz
means associated to the sublaplacian
(see \cite{Mueller2} for the Heisenberg case).
We shall address this problem
in the framework of M\'etivier groups in
a forthcoming paper \cite{CaCia2}.

\medskip

The paper is organized as
follows.
In Section 2 we
 recall the
spectral resolution
of the sublaplacian
on the Heisenberg group,
and 
use
the Koch-Ricci
estimates for the twisted laplacian
to strengthen M\"uller's estimate.
 In Section 3 we 
present some restriction estimates for the full Laplacian (defined by \eqref{laplaciano-full-completo}) on 
the Heisenberg group.
In Section 4
we compute
the spectral resolution of the sublaplacian
on a M\'etivier group $G$.
Following a well known procedure
(see \cite{Str} and \cite{Tay}),
by taking the Radon transform
in the central variables
and using the M\'etivier property,
we reduce the computation
of the spectral decompositon
of a function on $G$ to the
spectral decomposition
of its Radon transform
 on a Heisenberg group.
In Section 5
we prove the restriction theorem on
the M\'etivier groups.
An essential tool is given by 
a conditional statement,
based on the assumption that the spectral
projections  of the twisted
Laplacian
are bounded between
two Lebesgue spaces.
We conclude by showing that
the range of $r$ in \eqref{secondainterpolata 1} is
sharp.



\section{Restriction estimates
for the sublaplacian on the Heisenberg group}

In this section we discuss the case of the Heisenberg group.
The Heisenberg group $\HH_{n}$
is the space $\RR^{n} \times \RR^{n}\times \RR$
 equipped
with the product
\begin{equation*}
(x, y,t)(x', y',t') = \left(x+x',
y+y', t+t' + \frac 12 (x \cdot y'
-x' \cdot y)\right),
\end{equation*}
for $x, x', y, y'$ in $\RR^{n}$
and $t, t'$ in $\RR$.
This product turns $\HH_{n}$ into a two step nilpotent Lie group with centre
given by $\{ (0,0,t): t \in \RR \}$.

The algebra, $\mathfrak h_{n}$, of left invariant vector fields
on  $\HH_{n}$ is spanned by
\begin{equation*}
X_{j} = \frac {\partial}{\partial x_{j}}
- \frac 12 y_{j }\frac {\partial}{\partial t},
\quad
Y_{j} = \frac {\partial}{\partial y_{j}}
+ \frac 12 x_{j} \frac {\partial}{\partial t},
\quad
T = \frac {\partial}{\partial t},
\end{equation*}
$j = 1, \cdots, n$.
In terms of these vector fields we
introduce on $\HH_{n}$ the
sublaplacian
\begin{equation*}
L = - \sum_{j=1}^{n}
\left(
X_{j}^{2}+Y_{j}^{2}
\right),
\end{equation*}
which is hypoelliptic
since the set $\{X_{1} \cdots, Y_{n}\}$
generates $\mathfrak h_{n}$ as a Lie algebra,
and the full Laplacian
\begin{equation}\label{laplaciano-full-completo}
\Delta_{\HH} = - \sum_{j=1}^{n}
\left(
X_{j}^{2}+Y_{j}^{2}
\right) - T^{2} = L - T^{2}.
\end{equation}

We will use complex coordinates
\begin{equation*}
z_{j} = x_{j} + i y_{j}
\quad
\text{and}
\quad
\overline z_{j} = x_{j} - i y_{j},
\end{equation*}
$j = 1, \cdots, n$.
In these coordinates the Haar
measure coincides with the Lebesgue measure $dz dt = dx dy dt$.

\medskip

The operators $L$ and $- i T$
extend to a pair of strongly
commuting self-adjoint operators on
$L^2(\HH^n)$.
They therefore
admit a joint spectral decomposition,
that we now briefly recall for the sake of completeness.
For
more details
we refer the reader to 
the book \cite{Th2}.

Given a nonzero real number
$\lambda$ and a point $(z,t) = (x+iy,t)$ in $\HH_{n}$, we denote by $\pi_{\lambda}(z,t)$
the operator acting on $L^{2}(\RR^{n})$ defined by
\begin{equation*}
\pi_{\lambda}(z,t) \phi (\xi)
= e^{i \lambda t}
\pi_{\lambda}(z) \phi (\xi)
= e^{i \lambda \left(t + x \cdot \xi + \frac 12 x \cdot y \right)} \phi(\xi + y),
\end{equation*}
where $\pi_{\lambda}(z)
= \pi_{\lambda}(z, 0)$, so that
$\pi_{\lambda}(z,t)= e^{i \lambda t} \pi_{\lambda}(z,0)= e^{i \lambda t} \pi_{\lambda}(z)$.
For each $\lambda \neq 0$ the map $\pi_{\lambda}$ 
 from $\HH_{n}$ to the group of unitary operators on $L^{2}(\RR^{n})$ 
 is an irreducible
representation of $\HH_{n}$.
These maps
are called 
Schr\"odinger's  representations.
In terms of them
we define 
the group Fourier transform
of a Schwartz function $f$ on $\HH_{n}$,
which is given by
\begin{align*}
\RR \setminus \{0\} \ni \lambda \mapsto \pi_{\lambda}(f)
&= \int\limits_{\HH_{n}}
f(z,t) \pi_{\lambda}(z,t) dt dz
= \int\limits_{\CC_{n}}
f^{(\lambda)} (z) \pi_{\lambda}(z)
dz,
\end{align*}
where 
$$
f^{(\lambda)} (z)
= \int\limits_{-\infty}^{\infty}
f(z,t) e^{i \lambda t} dt
$$
is the ordinary Fourier transform 
of $f(z,t)$ in
the central variable $t$.
The information provided by
$\pi_{\lambda}(f)$
suffices to reconstruct $f$, which is
in fact given by
\begin{align}
\label{formula di inversione}
f(z,t) 
&= \frac{1}{(2\pi)^{n+1}} \int\limits_{-\infty}^{\infty}
e^{- i \lambda t}\,
\text{tr} \left(  \pi_{\lambda}(z)^{*}\pi_{\lambda}(f)
\right) |\lambda|^{n} d \lambda,
\end{align}
where we denote by
$\pi_{\lambda}(z)^{*}$
the adjoint of $\pi_{\lambda}(z)$
and  by $\text{tr}$
the trace of an operator on $L^{2}(\RR^{n})$.

The differential $d \pi_{\lambda}$ of $\pi_{\lambda}$
yields a representation of the
Lie algebra $\mathfrak 
h_{n}$.
This representation extends
to a representation of the
universal enveloping algebra 
 denoted by the same symbol
  $d \pi_{\lambda}$.
The derivative with respect to the central
variable, $T$, 
is represented in this picture by
the multiplication by $i \lambda$.
The sublaplacian $L$ is represented by the rescaled Hermite operator
\begin{equation*}
d \pi_{\lambda}(L) = \Delta + \lambda^{2} |\xi|^{2},
\end{equation*}
where $\Delta
= -\partial_1^2 - \cdots - \partial^2_n$ and $|\xi|^{2} = \xi_{1}^{2} +
\cdots + \xi_{n}^{2}$.

The Hermite operator $d \pi_{\lambda}(L)$ has a pure point spectrum with eigenvalues $|\lambda| (2k + n)$ for
$k = 0, 1, \cdots$. 
The eigenspace corresponding to the eigenvalue
$|\lambda| (2k + n)$
has an orthonormal basis given by
\begin{equation*}
\{ \Phi^{(\lambda)}_{\alpha}: |\alpha| = k \},
\end{equation*}
where the functions
$\Phi^{(\lambda)}_{\alpha}$
are defined, for each multiindex $\alpha = (\alpha_1,
\cdots, \alpha_n)$ in $\ZZ_+^n$ of length
$|\alpha| = \alpha_1 + \cdots+ \alpha_n = k$, by
\begin{equation*}
\Phi^{(\lambda)}_{\alpha} (\xi_1, \cdots, \xi_n) =
|\lambda|^{\frac n4}
h_{\alpha_{1}}(\sqrt{|\lambda|} \xi_1) \cdots h_{\alpha_{n}}(\sqrt{|\lambda|} \xi_n)
\end{equation*}
and the functions $h_i(t)$ are
normalized
one-dimensional Hermite functions
 in $L^2(\RR, dt)$.

Therefore, $d \pi_{\lambda}(L)$ is 
represented by
\begin{equation}\label{dpi di L}
d \pi_{\lambda}(L) = \sum_{k=0}^{\infty}
|\lambda| (2k + n) P^{\lambda}_{k},
\end{equation}
where $P^{\lambda}_{k}: L^{2}(\RR^{n}) \rightarrow L^{2}(\RR^{n})$
is the projection onto the eigenspace
corresponding to $|\lambda| (2k + n)$.

Inserting in 
 \eqref{formula di inversione}
 the decomposition 
$ \sum_{k=0}^{\infty} P^{\lambda}_{k}$ of
 the identity
 operator on $L^{2}(\RR^{n})$,
we obtain
\begin{align}\label{formula di inversione 2}
f(z,t) &= \frac{1}{(2\pi)^{n+1}} \sum_{k=0}^{\infty}
\int\limits_{-\infty}^{\infty}
e^{-i \lambda t}\,
\text{tr} \left( \pi_{\lambda}(z)^{*}
\pi_{\lambda}(f) P^{\lambda}_{k} \right) |\lambda|^{n} d \lambda.
\end{align}
This decomposition may be thought of as
the expansion of $f$
in joint eigenfunctions,
$e^{-i \lambda t}\,
\text{tr} \left( \pi_{\lambda}(z)^{*}
\pi_{\lambda}(f) P^{\lambda}_{k} \right)$, of $-i T$ and $L$.

\medskip

To obtain a more explicit form for
\eqref{formula di inversione 2} 
 we compute the trace of the operators
$\pi_{\lambda}(z)^{*} \pi_{\lambda}(f) 
P^{\lambda}_k$,
which is given by
\begin{equation*}
\text{tr} \left( \pi_{\lambda}(z)^{*}
\pi_{\lambda}(f) 
P^{\lambda}_k
 \right) =
 \sum_{|\alpha| = k} \left( \Phi^{\lambda}_\alpha, \pi_{\lambda}(z)^{*}
\pi_{\lambda}(f) 
\Phi^{\lambda}_\alpha
\right)_{L^2(\RR^n)}.
\end{equation*}
The sum may be expressed in
a close form,
introducing the $\lambda$-twisted convolution
on $\CC^{n}$.

\begin{definition}
Let $\lambda$ be a nonzero real number.
The 
{\rm{$\lambda$-twisted convolution}}
$h\times_\lambda g$ of two Schwartz functions $h,g$ on
$\CC^n$
is defined by
\begin{equation}
\label{convoluzione twisted su H_n}
(h\times_\lambda g)(z)=
\int_{\CC^n}
h(z-w)g(w)e^{\frac{i}{2} \lambda \Im m z\cdot  {\bar{w}}}
dw\,.
\end{equation}
When $\lambda = 1$ we shall write
$\times$ instead of
$\times_{1}$.
\end{definition}

Then
\begin{align*}
 \sum_{|\alpha| = k} \left( \Phi^{\lambda}_\alpha, \pi_{\lambda}(z)^{*}
\pi_{\lambda}(f) 
\Phi^{\lambda}_\alpha
\right)_{L^2(\RR^n)}
&= 
\frac 1 {(2\pi)^n}
f^{({
{\lambda}
})} 
\times_{{\lambda}}
\varphi^{{\lambda}}_{k}(z)
\end{align*}
(see \cite{Th2}),
where 
$$
\varphi_k^\lambda (z) =
\sum_{|\alpha| = k} (\pi_{\lambda}(z) \Phi_{\alpha}^{(\lambda)},
\Phi_{\alpha}^{(\lambda)} )_{L^2(\RR^n)}
= \tilde \varphi_k (\sqrt {|\lambda|} |z|),
$$
and
$\tilde \varphi_k$ is
the $k$th-Laguerre function normalized by $\| \tilde \varphi_{k}\|_{L^{2}(\RR_{+}, t^{n-1}dt)} = 1$.
If $g$ is a Schwartz function on
$\CC^{n}$ and $\lambda \neq 0$,
we set
\begin{equation}
\label{Lambda 1}
\Lambda^{\lambda}_{k} g (z) =
\frac 1 {(2\pi)^n}
g \times_{\lambda}
\varphi^{|\lambda|}_{k}(z),
\end{equation}
writing 
\begin{align*}
\text{tr} \left( \pi_{\lambda}(z)^{*}
\pi_{\lambda}(f) 
P^{\lambda}_k
 \right) &= 
\Lambda_{k}^{
{\lambda}} 
f^{({\lambda})}(z).
\end{align*}
The operators $\Lambda_{k}^{\lambda}$
are orthogonal projections in $L^2(\CC^n, |\lambda|^{n} dz)$, since
\begin{equation*}
\varphi_j \times \varphi_j = (2\pi)^n \varphi_j
\quad
\text{and}
\quad
\varphi_j \times \varphi_k = 0
\quad
\text{if $j \neq k$}
\end{equation*}
(see \cite[(1.4.30)]{Th2}), which imply
\begin{equation*}
|\lambda|^{2n}
\varphi_j^{|\lambda|} \times \varphi^{|\lambda|}_k = (2\pi)^n
\delta_{jk} 
|\lambda|^{n}
\varphi^{|\lambda|}_j.
\end{equation*}

\medskip

Then \eqref{formula di inversione 2} takes the form
\begin{align*}
f(z,t) &= \frac 1 {(2\pi)^{n+1}}
\sum_{k=0}^{\infty}
\int\limits_{-\infty}^{\infty}
e^{-i \lambda t}\,
\Lambda_{k}^{
{\lambda}} 
f^{({\lambda})}(z)
 |\lambda|^{n} d \lambda.
\end{align*}
This decomposition
 together with the allied Plancherel formula
\begin{align}\label{Plancherel}
\int\limits_{\CC^{n}}
\int\limits_{-\infty}^{\infty}
|f(z,t)|^{2} dt dz
&= \frac 1 {(2\pi)^{2n+1}}
\sum_{k=0}^{\infty}
\int\limits_{-\infty}^{\infty}
\left\|
\Lambda_{k}^{
{\lambda}} 
f^{({\lambda})}
\right\|_{L^2(\mathbb C^n)}^{2} |\lambda|^{2n} d \lambda,
\end{align}
is the starting point 
for the development of the joint functional calculus of $L$ and $T$. 
{Indeed,
given a bounded function $m : \RR_{+}
\times \RR\setminus \{0\}
\rightarrow \CC$, we define
for  a Schwartz function $f$
\begin{align}\label{definizione di m}
m(L, -iT) f (z,t) &= \frac 1 {(2\pi)^{n+1}}
\sum_{k=0}^{\infty}
\int\limits_{-\infty}^{\infty}
m(|\lambda| (2k+n), \lambda)
e^{-i \lambda t}\,
\Lambda_{k}^{{\lambda}} 
f^{({\lambda})}(z)
 |\lambda|^{n} d \lambda.
\end{align}
Then by \eqref{Plancherel} we have
\begin{align*}
\int\limits_{\CC^{n}}
\int\limits_{-\infty}^{\infty}
|m(L, -iT) f(z,t)|^{2} dt dz
&\leq \|m\|^2_{L^{\infty}(\RR_{+}
\times \RR\setminus \{0\})}
\int\limits_{\CC^{n}}
\int\limits_{-\infty}^{\infty}
|f(z,t)|^{2} dt dz.
\end{align*}}

We shall use \eqref{definizione di m} to
introduce the operators $\delta_{\mu} (L)$
and $\delta_{\mu}(\Delta_{\HH})$
for $\mu > 0$,
which are defined for
a Schwartz function $f$ by
\begin{equation*}
\delta_{\mu} (D) f = \lim_{\epsilon \rightarrow 0+}
\frac 1 {2 \epsilon} \chi_{(\mu - \epsilon,
\mu + \epsilon)} (D) f,
\end{equation*}
with $D = L, \Delta_{\HH}$,
where $\chi_{(\mu - \epsilon,
\mu + \epsilon)}$ is the characteristic function
of the interval ${(\mu - \epsilon,
\mu + \epsilon)}$.

More generally,
with the same techniques one can also consider operators of the form
 $\delta_{\mu}\left( m(L, -iT)
\right)$ for a suitable
 function $m$.
 In the following, $\lambda$ refers to the spectrum of $-iT$,
 and $|\lambda|(2k+n)$ to the spectrum of $L$.
If $m$ is an even function of $\lambda$,
we may rewrite
\eqref{definizione di m} as
\begin{align*}
m(L, -iT) f (z,t) &= \frac 1{(2\pi)^{n+1}}
\sum_{k=0}^{\infty}
\int\limits_{0}^{\infty}
m(\lambda (2k+n), \lambda)
\Big(
e^{-i \lambda t}\,
\Lambda_{k}^{{\lambda}} 
f^{({\lambda})}(z)
+
e^{i \lambda t}\,
\Lambda_{k}^{{-\lambda}} 
f^{({-\lambda})}(z)
\Big)
\lambda^{n} d \lambda.
\end{align*}
We also assume that
 $m(\lambda (2k+n), \lambda)$ is 
a differentiable function of
$\lambda$ on $\RR_{+}$, with strictly positive derivative, satisfying
$\lim_{\lambda \rightarrow 0+} 
m(\lambda (2k+n), \lambda) = 0$
and $\lim_{\lambda \rightarrow + \infty} 
m(\lambda (2k+n), \lambda) = + \infty$.
Then
the equation $m(\lambda (2k+n), \lambda) = \mu$ may be solved
for each $k$
to give $\lambda = \lambda^m_{k}(\mu)$.
For notational simplicity, we shall write $\lambda_{k}(\mu)$ instead of $\lambda^m_{k}(\mu)$
and denote by $\lambda'_{k}$  the
derivative of $\lambda_{k}$.
\\
Replacing in the integral $\lambda$
with $\mu$, we obtain
\begin{align*}
m(L, -iT) f (z,t) &= \frac 1{(2\pi)^{n+1}}
\sum_{k=0}^{\infty}
\int\limits_{0}^{\infty}
\mu
\Big(
e^{-i \lambda_{k}(\mu) t}\,
\Lambda_{k}^{{\lambda_{k}(\mu)}} 
f^{({\lambda_{k}(\mu)})}(z)
\\
&\qquad\qquad+
e^{i \lambda_{k}(\mu) t}\,
\Lambda_{k}^{{-\lambda_{k}(\mu)}} 
f^{({-\lambda_{k}(\mu)})}(z)
\Big)
 \lambda_{k}(\mu)^{n} 
\lambda'_{k}(\mu)
  d \mu,
\end{align*}
which is the spectral decomposition of $m(L,-iT)$.
Hence, the spectral resolution with respect to $m(L,-iT)$ of a Schwartz function $f$ is
\begin{align}\label{decomposizione di f}
\notag
f (z,t) &= \frac 1{(2\pi)^{n+1}}
\sum_{k=0}^{\infty}
\int\limits_{0}^{\infty}
\Big(
e^{-i \lambda_{k}(\mu) t}\,
\Lambda_{k}^{{\lambda_{k}(\mu)}} 
f^{({\lambda_{k}(\mu)})}(z)
\\
&\qquad\qquad+
e^{i \lambda_{k}(\mu) t}\,
\Lambda_{k}^{{-\lambda_{k}(\mu)}} 
f^{({-\lambda_{k}(\mu)})}(z)
\Big)
 \lambda_{k}(\mu)^{n} 
\lambda'_{k}(\mu)
  d \mu.
\end{align}


Given a Schwartz function $f$, its spectral resolution 
is given in terms of the distributions
$$
{\mathcal P}^{m}_{\mu} f =
\delta_{\mu}(m(L,-iT))f = \lim_{\varepsilon \rightarrow 0+} \frac 1{2 \varepsilon} \chi_{({\mu}
-\varepsilon,
{\mu}
+\varepsilon)}(m(L,-iT))f.
$$
Since $\text{tr} \left( \pi_{\mu}(z,t)^{*}
\pi_{\mu}(f)\right)$ is a continuous function
of $\mu$, this limit
exists and is given by
\begin{equation}
\label{Proiettore di m}
{\mathcal P}^{m}_{\mu} f(z,t)
=
\sum_{k=0}^{\infty}
 \frac {\lambda_{k}(\mu)^{n}\lambda'_{k}(\mu)}{(2\pi)^{n+1}} 
\Big(
e^{-i  \lambda_{k}(\mu) t}\,
\Lambda_{k}^{
{\lambda_{k}(\mu)}} 
f^{{(\lambda_{k}(\mu))}}(z)
+
e^{i \lambda_{k}(\mu) t}\,
\Lambda_{k}^{
{-\lambda_{k}(\mu)}} 
f^{{(-\lambda_{k}(\mu))}}(z)
\Big) .
\end{equation}
The inversion formula
\eqref{decomposizione di f}
may be written
\begin{align*}
f(z,t) &=
\int\limits_{0}^{\infty}
{\mathcal P}^{m}_{\mu} f (z,t) d \mu,
\end{align*}
where the integral converges in the
sense of distributions.

In the case of the sublaplacian $m(\mu, \lambda) =
\mu$, thus we have
$\mu = |\lambda|(2k+n)$,
which yields
$
\lambda_{k}(\mu) = \mu/ {2k + n}.
$
For notational simplicity, we shall write $\mu_k$ instead of
$\lambda_k (\mu)$.

Therefore, 
\begin{align}
\label{Plambda su Hn}
{\mathcal P}^{L}_{\mu} f(z,t) &=
 \frac {\mu^{n}}{(2\pi)^{n+1}} 
\sum_{k=0}^{\infty}
 \frac 1{(2k+n)^{n+1}} 
\Big(
e^{-i  \mu_{k} t}\,
\Lambda_{k}^{
{\mu_{k}}} 
f^{{(\mu_{k})}}(z)
+
e^{i (\mu_{k}) t}\,
\Lambda_{k}^{
{-\mu_{k}}} 
f^{{(-\mu_{k})}}(z)
\Big) .
\end{align}

In the case of the full Laplacian
$m(\mu, \lambda) = \mu + \lambda^{2}$,
hence
$\mu = |\lambda|(2k+n)+\lambda^{2}$
and
\begin{equation}\label{lambda per il
laplaciano completo}
\lambda_{k}(\mu)
= \frac12 \sqrt{4 \mu + (2k+n)^{2}}
- \frac{2k+n}2.
\end{equation}
Therefore,
\begin{align*}
{\mathcal P}^{\Delta_{\HH}}_{\mu} f(z,t) &=
\frac 1{4^{n} \pi^{n+1}} 
\sum_{k=0}^{\infty}
\frac{\left(
\sqrt{4 \mu + (2k+n)^{2}}
- 2k-n
\right)^{n}}{\sqrt{4 \mu + (2k+n)^{2}}}
\times
\\
&
\times\Big(
e^{-i  \lambda_{k}(\mu) t}\,
\Lambda_{k}^{
{\lambda_{k}(\mu)}} 
f^{{(\lambda_{k}(\mu))}}(z)
+
e^{i \lambda_{k}(\mu) t}\,
\Lambda_{k}^{
{-\lambda_{k}(\mu)}} 
f^{{(-\lambda_{k}(\mu))}}(z)
\Big) .
\end{align*}

\medskip

The operators $\Lambda_{k}^{\lambda}$,
defined by \eqref{Lambda 1}, are
the spectral projection of the 
twisted Laplacian on $\CC^{n}$, that we now introduce.
Given a Schwartz function $f$,
consider the Fourier transform in the central variable of
the functions $X_{j}f$ and $Y_{j}f$.
Integrating by parts we obtain
\begin{align*}
(X_{j} f)^{(\lambda)}(z)
&= \left(
\frac {\partial}{\partial x_{j}}
+ \frac i2 \lambda y_{j }
\right)
f^{(\lambda)}(z),
\end{align*}
and similarly for $Y_{j}$.
Hence, setting
\begin{equation*}
X^{(\lambda)}_{j} = \frac {\partial}{\partial x_{j}}
+ \frac i2 \lambda y_{j }
\quad
\text{and}
\quad
Y^{(\lambda)}_{j} = \frac {\partial}{\partial x_{j}}
- \frac i2 \lambda y_{j },
\end{equation*}
we have
\begin{equation*}
(X_{j} f)^{(\lambda)}
= X^{(\lambda)}_{j} f^{(\lambda)}
\quad
\text{and}
\quad
(Y_{j} f)^{(\lambda)}
= Y^{(\lambda)}_{j} f^{(\lambda)}.
\end{equation*}
These formulas imply
\begin{equation*}
(Lf)^{(\lambda)} = - \sum_{j=1}^{n}
\left(
(X^{(\lambda)}_{j})^{2}
+ (Y^{(\lambda)}_{j})^{2}
\right) f^{(\lambda)}
= \Delta^{(\lambda)} f^{(\lambda)},
\end{equation*}
where $\Delta^{(\lambda)}$, for $\lambda \neq 0$, is the
{\it $\lambda$-twisted Laplacian}.
Note that $\Delta^{(0)}$ is the Laplacian on $\CC^n$.

From properties of the twisted convolution and of the Laguerre functions
(see \cite{ Th2}),
it follows that for any Schwartz function $g$ on $\CC^{n}$
\begin{equation*}
\Delta^{(\lambda)}
\left(
\Lambda_{k}^{\lambda} g
\right )
=
|\lambda| (2k +n) \Lambda_{k}^{\lambda} g
\end{equation*}
and
\begin{equation*}
g = \sum_{k=0}^{\infty}
\Lambda_{k}^{\lambda } g.
\end{equation*}
Therefore, for $\lambda \neq 0$ the operators
$\Lambda_{k}^{\lambda}$ are the spectral projections
associated to $\Delta^{(\lambda)}$.
When $\lambda = 1$,
we will write for simplicity $\Lambda_{k}$,
instead of $\Lambda^{1}_{k}$.

\medskip

We now prove estimate \eqref{secondainterpolata 1} in
Theorem \ref{nostro} in the special case of the Heisenberg group, thus
improving on \eqref{teorema di muller} in Theorem \ref{annals}.
The first step is a simple lemma that easily follows 
from dilation arguments.

\begin{lemma}\label{lemmanormaPirok}
Suppose that $\Lambda_k :L^p (\CC^n)\to L^q (\CC^n)$
for some $p,q$.
If $g$ lies in $\cS (\CC^n)$,
then for all $\lambda > 0$ we have
\begin{equation*}
\big\|
\Lambda^\lambda_k g \big\|_{L^q(\CC^n)} 
 \leq
\lambda^{n(\frac1p-\frac1q-1)}
\big\| \Lambda_k \big\|_{L^p(\CC^n)
 \rightarrow
 L^q(\CC^n)
}
\|g\|_{L^p(\CC^n)}.
\end{equation*}
\end{lemma}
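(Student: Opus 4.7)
The plan is to reduce the claim to the assumed $L^p\to L^q$ boundedness of $\Lambda_{k}$ via a dilation argument, exploiting the fact that the $\lambda$-twisted convolution and the Laguerre function $\varphi_{k}^{|\lambda|}$ enjoy compatible scaling with respect to the Heisenberg dilations $z\mapsto\sqrt{\lambda}\,z$. Concretely, write out the integral
\begin{equation*}
\Lambda^{\lambda}_{k}g(z)=\frac{1}{(2\pi)^{n}}\int_{\CC^{n}}g(z-w)\,\varphi_{k}^{\lambda}(w)\,e^{\frac{i}{2}\lambda\,\Im(z\cdot\bar{w})}\,dw
\end{equation*}
and substitute $w\mapsto w/\sqrt{\lambda}$. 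Using that $\varphi_{k}^{\lambda}(w/\sqrt{\lambda})=\tilde\varphi_{k}(|w|)=\varphi_{k}(w)$ and that the phase becomes $\tfrac{i}{2}\Im(\sqrt{\lambda}\,z\cdot\bar{w})$, one obtains the identity
\begin{equation*}
\Lambda^{\lambda}_{k}g(z)=\lambda^{-n}\,\bigl(\Lambda_{k}g_{\lambda}\bigr)\!\bigl(\sqrt{\lambda}\,z\bigr),\qquad\text{where}\quad g_{\lambda}(u):=g\bigl(u/\sqrt{\lambda}\bigr).
\end{equation*}

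Next I would take the $L^{q}(\CC^{n})$ norm of both sides and apply the change of variables $u=\sqrt{\lambda}\,z$, which contributes a Jacobian factor $\lambda^{-n}$ (recall $\CC^{n}$ is $2n$-real-dimensional, so $du=\lambda^{n}\,dz$). This yields
\begin{equation*}
\bigl\|\Lambda^{\lambda}_{k}g\bigr\|_{L^{q}(\CC^{n})}=\lambda^{-n-\frac{n}{q}}\bigl\|\Lambda_{k}g_{\lambda}\bigr\|_{L^{q}(\CC^{n})}.
\end{equation*}
Invoking the hypothesis $\Lambda_{k}:L^{p}\to L^{q}$ and then a second elementary change of variables to evaluate $\|g_{\lambda}\|_{L^{p}}=\lambda^{n/p}\,\|g\|_{L^{p}}$, the two scaling exponents combine to give $\lambda^{-n-n/q+n/p}=\lambda^{n(\frac{1}{p}-\frac{1}{q}-1)}$, which is exactly the exponent in the statement.

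There is no substantive obstacle here; the content of the lemma is pure homogeneity. The only item requiring a little care is keeping track of the correct exponent $\sqrt{\lambda}$ (rather than $\lambda$) in the dilation, which is forced by the fact that $\varphi_{k}^{\lambda}(z)=\tilde\varphi_{k}(\sqrt{\lambda}|z|)$, together with the factor of $\lambda$ appearing in the symplectic phase. Once the identity $\Lambda^{\lambda}_{k}g(z)=\lambda^{-n}(\Lambda_{k}g_{\lambda})(\sqrt{\lambda}\,z)$ is established, the rest is routine bookkeeping, and the assumption $\lambda>0$ (as opposed to $\lambda\neq 0$) makes the square roots unambiguous.
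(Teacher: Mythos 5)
Your proof is correct and is precisely the dilation argument the paper alludes to (it offers no explicit proof, only the remark that the lemma ``easily follows from dilation arguments''). The key identity $\Lambda^{\lambda}_{k}g(z)=\lambda^{-n}(\Lambda_{k}g_{\lambda})(\sqrt{\lambda}\,z)$ with $g_{\lambda}(u)=g(u/\sqrt{\lambda})$ is obtained exactly by the substitution you describe, and the exponent bookkeeping ($-n$ from the Jacobian of $w\mapsto w/\sqrt{\lambda}$, $-n/q$ from rescaling the $L^{q}$ norm, $+n/p$ from $\|g_{\lambda}\|_{L^{p}}$) correctly assembles to $\lambda^{n(\frac{1}{p}-\frac{1}{q}-1)}$.
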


\begin{remark}
Observe that $\big\| \Lambda^{-1}_k \big\|_{L^p(\CC^n)
 \rightarrow
 L^q(\CC^n)
} = \big\| \Lambda^{1}_k \big\|_{L^p(\CC^n)
 \rightarrow
 L^q(\CC^n)
} =\big\| \Lambda_k \big\|_{L^p(\CC^n)
 \rightarrow
 L^q(\CC^n)
}
$.
\end{remark}

Then we prove the following
conditional statement.

\begin{proposition}\label{stima astratta} Let $\mu > 0$.
If $\Lambda_{k}$ is bounded from
$L^{p}(\CC^{n})$ to $L^{q}(\CC^{n})$, 
then
\begin{align}\label{stima astratta 11}
\left
\|\mathcal P^{m}_{\mu} f
\right\|_{L^{\infty}_{t}L^{q}_{z}}
&\leq
\frac {2 \| f \|_{L^{1}_{t}L^{p}_{z}}}
{(2\pi)^{n+1}}
\left(
\sum_{k=0}^{\infty}
{(\lambda_{k}(\mu))^{
n\left(\frac1p - \frac1q
\right)}
\lambda'_{k}(\mu)}
\|\Lambda_{k}
\|_{L^{p}(\CC^{n}) \rightarrow L^{q}(\CC^{n})}
\right)
\end{align}
for all Schwartz functions $f$ on $\HH_{n}$.
\end{proposition}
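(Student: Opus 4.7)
The plan is to bound $\mathcal{P}^{m}_\mu f$ directly from the explicit series representation \eqref{Proiettore di m}, distributing the $L^\infty_t L^q_z$ norm term by term.

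First I would apply Minkowski's inequality on the sum over $k$ in \eqref{Proiettore di m}, reducing the problem to estimating, for each $k$, the $L^q_z$ norm of
\[
e^{-i\lambda_k(\mu) t}\,\Lambda_k^{\lambda_k(\mu)} f^{(\lambda_k(\mu))}(z) + e^{i\lambda_k(\mu) t}\,\Lambda_k^{-\lambda_k(\mu)} f^{(-\lambda_k(\mu))}(z).
\]
Since the exponentials have modulus one, taking $L^q_z$ and then $\sup_t$ simply strips them away and yields a sum of two equal contributions, which accounts for the factor $2$ in the claimed bound. The remaining quantity to control is $\|\Lambda_k^{\pm\lambda_k(\mu)} f^{(\pm\lambda_k(\mu))}\|_{L^q(\CC^n)}$.

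Next I would apply Lemma \ref{lemmanormaPirok} (together with the remark that $\|\Lambda_k^{-\lambda}\|_{p\to q} = \|\Lambda_k^{\lambda}\|_{p\to q}$) to obtain
\[
\|\Lambda_k^{\pm\lambda_k(\mu)} g\|_{L^q(\CC^n)} \leq \lambda_k(\mu)^{n(\frac1p-\frac1q-1)}\,\|\Lambda_k\|_{L^p\to L^q}\,\|g\|_{L^p(\CC^n)},
\]
taking $g = f^{(\pm\lambda_k(\mu))}$. The partial Fourier transform in $t$ obeys $|f^{(\lambda)}(z)| \leq \int|f(z,t)|\,dt$, so Minkowski's integral inequality (in the form adapted to the mixed-norm convention of \eqref{norme di muller}) gives
\[
\|f^{(\lambda)}\|_{L^p(\CC^n)} \leq \|f\|_{L^1_t L^p_z}
\]
uniformly in $\lambda$.

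Combining these ingredients inside the sum, each term carries a factor $\lambda_k(\mu)^n\lambda_k'(\mu)$ from \eqref{Proiettore di m} and a factor $\lambda_k(\mu)^{n(\frac1p-\frac1q-1)}$ from Lemma \ref{lemmanormaPirok}; their product is $\lambda_k(\mu)^{n(\frac1p-\frac1q)}\lambda_k'(\mu)$, which matches the claimed exponent. Collecting everything yields \eqref{stima astratta 11}. The argument is essentially mechanical; the only point that requires a moment of care is the dilation identity in Lemma \ref{lemmanormaPirok}, so that the powers of $\lambda_k(\mu)$ combine to give precisely $n(\frac1p-\frac1q)$, and the correct interpretation of the mixed norm so that the Minkowski bound on $f^{(\lambda)}$ lands in $L^1_t L^p_z$ rather than its reverse-order analogue.
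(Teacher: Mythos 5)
Your proof is correct and follows essentially the same strategy as the paper: bound the series term by term via the triangle inequality, strip the unimodular exponentials in $t$, apply Lemma~\ref{lemmanormaPirok} (together with the remark that $\Lambda_k^{-\lambda}$ and $\Lambda_k^{\lambda}$ have equal norm), and use the pointwise bound $|f^{(\lambda)}(z)|\leq\int|f(z,t)|\,dt$ to land in $\|f\|_{L^1_tL^p_z}$. The only cosmetic difference is that the paper begins by writing $f(z,t)=h(t)g(z)$ for notational convenience, whereas you carry a general Schwartz $f$ throughout; the bound $|\hat h(\lambda)|\leq\|h\|_{L^1}$ used there plays exactly the role of your pointwise estimate on $f^{(\lambda)}$.
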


\begin{proof}
To simplify the notations we
write 
$f$ as if it were the product of two functions, 
that is $f(z,t) = h(t) g(z)$.
Then \eqref{Proiettore di m} becomes
\begin{align*}
{\mathcal P}^{m}_{\mu} f(z,t) &=
\sum_{k=0}^{\infty}
 \frac {(\lambda_{k}(\mu))^{n}\lambda'_{k}(\mu)}{(2\pi)^{n+1}} 
\Big(
e^{-i  (\lambda_{k}(\mu)) t}\,
\hat h ((\lambda_{k}(\mu)))
\Lambda_{k}^{
{(\lambda_{k}(\mu))}} 
g(z)
+
e^{i (\lambda_{k}(\mu)) t}\,
\hat h (-(\lambda_{k}(\mu)))
\Lambda_{k}^{
{-(\lambda_{k}(\mu))}} 
g(z)
\Big) .
\end{align*}
Since $|\hat h (\lambda)| \leq \| h\|_{L^{1}(\RR)}$ for all $\lambda$, we obtain
\begin{align*}
\left| {\mathcal P}^{m}_{\mu} f(z,t) \right |
&\leq
\|h\|_{L^{1}_{t}}
\sum_{k=0}^{\infty}
 \frac {(\lambda_{k}(\mu))^{n}\lambda'_{k}(\mu)}{(2\pi)^{n+1}} 
\Big(
|\Lambda_{k}^{
{(\lambda_{k}(\mu))}} 
g(z)|
+
|\Lambda_{k}^{
{-\lambda_{k}(\mu)}} 
g(z)|
\Big) .
\end{align*}
Therefore, the triangle inequality implies
\begin{align*}
\left(
\int\limits_{\CC^{n}}
|{\mathcal P}^{m}_{\mu} f (z,t)|^{q} dz
\right)^{\frac 1q}
&\leq
\frac {\| h\|_{L^{1}(\RR)}}{(2\pi)^{n+1}}
\sum_{k=0}^{\infty}
{\lambda_{k}(\mu)^{n}\lambda'_{k}(\mu)} 
\Big(
\|\Lambda_{k}^{
{\lambda_{k}(\mu)}} 
g\|_{L^{q}(\CC^{n})}
+
\| \Lambda_{k}^{
{-\lambda_{k}(\mu)}} 
g\|_{L^{q}(\CC^{n})}
\Big),
\end{align*}
which by Lemma~\ref{lemmanormaPirok} 
and the subsequent observation,
yields, 
\begin{align*}
\left(
\int\limits_{\CC^{n}}
|{\mathcal P}^{m}_{\mu} f (z,t)|^{q} dz
\right)^{\frac 1q}
&\leq
\frac 2{(2\pi)^{n+1}}
\| h\|_{L^{1}(\RR)}
\|g\|_{L^{p}(\CC^{n})}
\times \\ &\times
\left(
\sum_{k=0}^{\infty}
{(\lambda_{k}(\mu))^{
n\left(\frac1p - \frac1q
\right)}
\lambda'_{k}(\mu)}
\|\Lambda_{k}^{1}
\|_{L^{p}(\CC^{n}) \rightarrow L^{q}(\CC^{n})}
\right) ,
\end{align*}
proving the statement.
\end{proof}

\medskip

In order to control the convergence 
of the series in
\eqref{stima astratta 11},
we apply 
the sharp estimates for the 
$L^p-L^2$ norms, $1\le p\le 2$, of the operators $\Lambda_{k}$ recently proved by H. Koch and F. Ricci,
stating that
\begin{equation}\label{sogge 1}
\|\Lambda_{k}\|_{L^{p}(\CC^{n})
\rightarrow L^{2}(\CC^{n})}
\lesssim
C (2k+n)^{\gamma(\frac1p)}\,,
\quad
 1\le p\le 2\,,
\end{equation}
where $\gamma$
is the piecewise
affine function on $[\frac12,1]$
defined by
\begin{equation*}
\gamma\left (\frac1p \right):=
\begin{cases}\!
{n\left(\frac1p-\frac{1}{2}\right)-\frac{1}{2} 
}
&\,\text{if
  $1\le p\leq p_{*}$,}\cr
{\frac{1}{2}(\frac{1}{2}-\frac1p)}
&\,\text{if
  $ p_{*}\le p\le 2$,}\cr
\end{cases}
\end{equation*}
with  {{critical point}}
$p_{*} = p_{*}(2n)$,
defined by \eqref{punto-critico}.

\begin{lemma}
If $1 \leq p \leq 2\leq q\leq \infty $, then
\begin{equation}\label{p,q 1}
\|\Lambda_{k}\|_{L^{p}(\CC^{n})
\rightarrow L^{q}(\CC^{n})}
\le
C (2k+n)^{\gamma\left(\frac1p \right) + \gamma\left(\frac 1 {q'} \right)}\,.
\end{equation}
\end{lemma}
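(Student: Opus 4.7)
The plan is to exploit the fact that $\Lambda_k$ is an orthogonal projection on $L^2(\CC^n)$, so it is both idempotent ($\Lambda_k = \Lambda_k \circ \Lambda_k$) and self-adjoint. Combined with the Koch--Ricci estimate \eqref{sogge 1}, these two features let us deduce $L^p\to L^q$ bounds for any $p \leq 2 \leq q$ from the sharp $L^p\to L^2$ bounds by passing through the $L^2$ midpoint.

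First I would record the duality step. Since $\Lambda_k$ is self-adjoint in $L^2(\CC^n)$, the adjoint of $\Lambda_k:L^{q'}\to L^2$ is $\Lambda_k: L^2 \to L^q$, so
\begin{equation*}
\|\Lambda_k\|_{L^2(\CC^n)\to L^q(\CC^n)} = \|\Lambda_k\|_{L^{q'}(\CC^n)\to L^2(\CC^n)}.
\end{equation*}
Since $1 \le q' \le 2$, the right-hand side is controlled by \eqref{sogge 1}, yielding
\begin{equation*}
\|\Lambda_k\|_{L^2(\CC^n)\to L^q(\CC^n)} \le C (2k+n)^{\gamma(1/q')}.
\end{equation*}

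Next I would factor $\Lambda_k = \Lambda_k \circ \Lambda_k$ (using $\varphi_k \times \varphi_k = (2\pi)^n \varphi_k$, recorded earlier in the section) and insert the $L^2$ bound in the middle:
\begin{equation*}
\|\Lambda_k\|_{L^p(\CC^n)\to L^q(\CC^n)} \le \|\Lambda_k\|_{L^2(\CC^n)\to L^q(\CC^n)}\cdot \|\Lambda_k\|_{L^p(\CC^n)\to L^2(\CC^n)}.
\end{equation*}
Applying \eqref{sogge 1} to the second factor and the duality bound to the first gives exactly \eqref{p,q 1}.

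No genuine obstacle is expected: the argument is a clean two-step factorisation, and all the analytic work is already done by Koch--Ricci. The only small point to verify is that $\Lambda_k$ is self-adjoint with respect to the standard $L^2(\CC^n,dz)$ pairing (rather than some weighted pairing), which follows from the symmetry properties of twisted convolution with the real radial Laguerre function $\varphi_k$.
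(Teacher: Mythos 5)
Your proof is correct and follows essentially the same route as the paper: duality (using that $\Lambda_k$ is a self-adjoint orthogonal projection on $L^2(\CC^n)$) converts the Koch--Ricci $L^{q'}\to L^2$ bound into the $L^2\to L^q$ bound, and then idempotence $\Lambda_k=\Lambda_k\circ\Lambda_k$ lets you compose that with the $L^p\to L^2$ bound to get the $L^p\to L^q$ estimate. The paper's proof is merely more terse; no substantive difference.
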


\begin{proof}
By duality the estimates \eqref{sogge 1} are equivalent to
\begin{equation*}
\|\Lambda_{k}\|_{L^{2}(\CC^{n})
\rightarrow L^{q}(\CC^{n})}
\leq
C (2k+n)^{\gamma\left(\frac1{q'}\right)} \,,
\quad\text{ $2\le q\le \infty$}\,,
\end{equation*}
yielding
\eqref{p,q 1}.
\\
It easily follows
from \cite{KR} that the above estimate
is sharp for $1 \leq p \leq p_*(2n)$,
 $p'_*(2n) \leq q \leq \infty$
and for $p_*(2n)\leq p \leq 2$,
 $2 \leq q \leq p'_*(2n)$.

\end{proof}

In the case of the sublaplacian,
from \eqref{stima astratta 11}
it follows that
\begin{align}\label{stima astratta 21}
\left
\|\mathcal P^{L}_{\mu} f
\right\|_{L^{\infty}_{t}L^{q}_{z}}
&\leq
C
{\mu^{n \left(\frac 1p - \frac 1q\right)}} 
\left(
\sum_{k=0}^{\infty}
{(2k+n)^{\gamma\left(\frac1p \right) + \gamma\left( \frac 1 {q'} \right)
-n\left(\frac 1p - \frac 1q \right) - 1}
}
\right)
\| f \|_{L^{1}_{t}L^{p}_{z}},
\end{align}
for $1 \leq p \leq 2 \leq q \leq \infty$.
To study the convergence of this series,  we need to distinguish
four cases
according to the relative position
of $p$ and $q$ with respect to
the critical exponents $p_{*}(2n)$ and
$p'_{*}(2n)$.
We collect the result in
the following lemma.

\begin{lemma}\label{convergenza della serie}
 For any real number $\alpha$ we define
\begin{align*}
\mathcal S_{\alpha} = \sum_{k=0}^{\infty}
{(2k+n)^{\gamma\left(\frac1p \right) + \gamma\left(\frac 1 {q'} \right)
-n\left(\frac 1p - \frac 1q \right) + 
\alpha}
}.
\end{align*}
Then:
\begin{enumerate}[(I)]

\item For $1 \leq p < p_{*}$, $2 \leq q \leq p'_{*}$ the series $\mathcal S_{\alpha}$
converges if
$
\alpha <
\frac{2n+1}{2}
\left(
\frac{1}{p'_{*}} -\frac{1}{q}
\right)$.

\item For $1 \leq p < p_{*}$ and
$p_{*}' \leq q \leq \infty$ the series $\mathcal S_{\alpha}$
converges if $\alpha < 0$.

\item For $p_{*} \leq p \leq 2$ and $2 \leq q
\leq p_{*}'$ the series $\mathcal S_{\alpha}$
converges if
$
\alpha < \frac{2n+1}2 \left( \frac 1p -\frac 1q \right) - 1$.

\item For $p_{*} \leq p \leq 2$ and $p_{*}' \leq q \leq \infty$ the series $\mathcal S_{\alpha}$
converges if
$
\alpha <\frac{2n+1}2 \left( \frac 1p- 
\frac1{p_{*}}\right)$.
\end{enumerate}
\end{lemma}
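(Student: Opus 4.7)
My plan is to reduce the convergence question to a routine asymptotic test. Each summand has the form $(2k+n)^{E}$ with
\[
E = E(p,q,\alpha) := \gamma\!\left(\tfrac1p\right) + \gamma\!\left(\tfrac{1}{q'}\right) - n\!\left(\tfrac1p-\tfrac1q\right) + \alpha,
\]
and by the standard comparison with $\sum k^{E}$ the series $\mathcal{S}_\alpha$ converges if and only if $E<-1$. So the whole lemma is, in essence, the computation of the threshold $\alpha_{0}(p,q)=-1+n(1/p-1/q)-\gamma(1/p)-\gamma(1/q')$ case by case.

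Since $\gamma$ is piecewise affine with break point at $1/p_{*}$, the four cases (I)--(IV) in the statement correspond exactly to the four combinations of which branch of $\gamma$ is used at $1/p$ and at $1/q'=1-1/q$: the constraint $1\le p<p_{*}$ places $1/p$ on the upper branch $\gamma(1/p)=n(1/p-1/2)-1/2$, while $p_{*}\le p\le 2$ places it on the lower branch $\gamma(1/p)=\tfrac12(\tfrac12-1/p)$; and, dually, $2\le q\le p_{*}'$ means $1/q'\in[1/2,1/p_{*}]$ (lower branch) while $p_{*}'\le q\le\infty$ means $1/q'\in[1/p_{*},1]$ (upper branch). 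So my plan is to walk through the four cases, plug in the right expression for $\gamma$, and read off $\alpha_{0}(p,q)$.

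In each case the simplification is a short algebraic computation; I will just sketch the pattern. In Case (II), both $1/p$ and $1/q'$ are on the upper branch, so $\gamma(1/p)+\gamma(1/q')=n(1/p-1/q)-1$, the $n(1/p-1/q)$ cancels against the same term in $E$, and the threshold collapses to $\alpha<0$. In Case (III), both arguments are on the lower branch, $\gamma(1/p)+\gamma(1/q')=\tfrac12(1/q-1/p)$, producing $E = -\tfrac{2n+1}{2}(1/p-1/q)+\alpha$, which yields $\alpha<\tfrac{2n+1}{2}(1/p-1/q)-1$. Cases (I) and (IV) mix the branches, and after combining the constants the threshold is most naturally expressed in terms of $1/p_{*}'=(2n-1)/(2(2n+1))$ and $1/p_{*}=(2n+3)/(2(2n+1))$, giving respectively $\alpha<\tfrac{2n+1}{2}(1/p_{*}'-1/q)$ and $\alpha<\tfrac{2n+1}{2}(1/p-1/p_{*})$.

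There is no real obstacle here beyond careful bookkeeping: the one point that demands attention is keeping the identification $1/q'=1-1/q$ straight and correctly matching the region of $q$ to the branch of $\gamma$ evaluated at $1/q'$ (the ``flip'' at $q=p_{*}'$ corresponds to the break at $1/q'=1/p_{*}$). Once the four algebraic simplifications are carried out and the condition $E<-1$ is imposed, the four inequalities stated in (I)--(IV) drop out directly.
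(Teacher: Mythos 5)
Your proposal is correct and proceeds by exactly the same route as the paper: reduce convergence of $\mathcal S_\alpha$ to the exponent condition $\gamma(1/p)+\gamma(1/q')-n(1/p-1/q)+\alpha<-1$, identify which affine branch of $\gamma$ applies at $1/p$ and at $1/q'$ in each of the four regions, and solve for $\alpha$. The paper writes out only case (I), leaving the others as ``analogous,'' and your branch assignments and the resulting thresholds (in particular the simplifications in cases (II) and (III)) all check out against a direct computation.
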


\begin{proof}
In order not to burden the exposition,
we prove only (I), the other cases
being analogous.
The series converges if
\begin{equation}\label{esponente 1}
{\gamma\left(\frac1p  \right) +\gamma\left(\frac 1 {q'} \right)
-n\left(\frac 1p - \frac 1q \right)} <
 -1-\alpha.
\end{equation}
If $1 \leq p < p_{*}$, $2 \leq q \leq p'_{*}$ we have
\begin{align*}
{\gamma\left(\frac1p  \right) +\gamma\left(\frac 1 {q'} \right)
-n\left(\frac 1p - \frac 1q \right)}
&=
\frac{2n+1}{2q} - \frac{2n+3}{4} .
\end{align*}
Thus, the condition \eqref{esponente 1}
becomes
\begin{align*}
\alpha <
\frac{2n+1}{2}
\left(
\frac{2n-1}{2{(2n+1)}} -\frac{1}{q}
\right)
=
\frac{2n+1}{2}
\left(
\frac{1}{p'_{*}} -\frac{1}{q}
\right)
\end{align*}
proving (I).
\end{proof}

Estimate \eqref{stima astratta 21} and Lemma \ref{convergenza della serie}
with $\alpha = -1$ entail the following result.

\begin{theorem}
\label{muller improved 1}
For all pairs $(p,q)$ with $1 \leq p \leq 2$, $2 \leq q \leq
\infty$, and $(p, q) \neq (2, 2)$,
there is a constant $C_{p q}$, such that
for all Schwartz functions $f$ and
all positive numbers $\mu$ we have the inequality
\begin{align}
\notag
\left
\|\mathcal P^L_{\mu} f
\right\|_{L^{\infty}_{t}L^{q}_{z}}
&\leq
C_{pq}
{\mu^{n \left(\frac 1p - \frac 1q\right)}} \| f \|_{L^{1}_{t}L^{p}_{z}}.
\end{align}
\end{theorem}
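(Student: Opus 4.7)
The plan is to invoke the already-established inequality \eqref{stima astratta 21} and then argue that the series $\mathcal S_{-1}$ (in the notation of Lemma~\ref{convergenza della serie}) converges under the stated hypotheses. Concretely: Proposition~\ref{stima astratta} combined with the Koch--Ricci bound \eqref{p,q 1} and the explicit value $\lambda_k(\mu)=\mu/(2k+n)$ (so that $\lambda_k'(\mu)=1/(2k+n)$) already gives
\begin{equation*}
\left\|\mathcal P^L_\mu f\right\|_{L^\infty_t L^q_z}
\le C\,\mu^{n(\frac1p-\frac1q)}\,\mathcal S_{-1}\,\|f\|_{L^1_t L^p_z}\,,
\end{equation*}
so the entire task reduces to verifying that $\mathcal S_{-1}<\infty$ in each of the four regions described in Lemma~\ref{convergenza della serie}, whenever $1\le p\le 2\le q\le\infty$ with $(p,q)\ne(2,2)$.

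The verification is a case-by-case comparison of $\alpha=-1$ with the convergence threshold in each region. In case (II) the threshold is $0$, so the bound $-1<0$ is immediate. In case (I) the threshold is $\frac{2n+1}{2}(\frac1{p'_*}-\frac1q)$; since $2\le q\le p'_*$ this threshold is non-negative, hence strictly bigger than $-1$. In case (IV) the threshold is $\frac{2n+1}{2}(\frac1p-\frac1{p_*})$; plugging in the extremes $p=p_*$ and $p=2$ yields $0$ and $-\frac12$ respectively, both larger than $-1$, and monotonicity in $1/p$ covers all intermediate values. The only delicate region is case (III), where the threshold is $\frac{2n+1}{2}(\frac1p-\frac1q)-1$: the condition $-1<\frac{2n+1}{2}(\frac1p-\frac1q)-1$ is equivalent to $p<q$, which holds throughout $p_*\le p\le2\le q\le p'_*$ except exactly at the forbidden point $p=q=2$. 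This is the only place where the excluded pair $(2,2)$ enters.

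The main (and essentially only) obstacle is bookkeeping: one must check that the various pieces $\gamma(1/p)+\gamma(1/q')-n(1/p-1/q)$ computed in the four regions do indeed reproduce the thresholds used above. Since these calculations were carried out in proving Lemma~\ref{convergenza della serie}, no further work is needed, and the theorem follows by substituting $\alpha=-1$ into each of the four statements (I)--(IV) and reading off the constant $C_{pq}$ from the resulting bound on $\mathcal S_{-1}$.
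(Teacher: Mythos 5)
Your approach is exactly the one the paper uses: the paper's entire proof of this theorem is the single sentence ``Estimate \eqref{stima astratta 21} and Lemma \ref{convergenza della serie} with $\alpha = -1$ entail the following result,'' so your case-by-case verification is precisely the suppressed bookkeeping. The conclusions in all four cases are correct, and you rightly single out case (III), where the equivalence of convergence with $p<q$ is what forces the exclusion of $(p,q)=(2,2)$.

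One small slip in case (I): you assert that the threshold $\frac{2n+1}{2}\bigl(\frac{1}{p'_*}-\frac{1}{q}\bigr)$ is non-negative because $2\le q\le p'_*$, but in fact $q\le p'_*$ gives $\frac1q\ge\frac1{p'_*}$, so the quantity is \emph{non-positive}. Explicitly, with $\frac1{p'_*}=\frac{2n-1}{2(2n+1)}$ the threshold equals $\frac{2n-1}{4}-\frac{2n+1}{2q}$, which ranges from $-\frac12$ at $q=2$ up to $0$ at $q=p'_*$. It is therefore never non-negative in the interior of the range, but it is always $\ge-\frac12>-1$, so your conclusion that $\mathcal S_{-1}$ converges in case (I) stands; only the stated reason is off by a sign. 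The other three cases are handled correctly.
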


\medskip

\begin{remark}
Note that
in the four cases listed above we can insert a positive power of $k$ in the series
still preserving the convergence.
This amounts to study a derivative
$\partial^{-\alpha}_{t}$
of negative order of
$\mathcal P^{L}_{\mu} f$
and estimate its norm.
In other words, instead of considering
the operator $\mathcal P^{L}_{\mu}$
given by \eqref{Plambda su Hn}, we introduce the operator
\begin{align*}
\partial_{t}^{-\alpha}&
\mathcal P^{L}_{\mu} f (z,t) =
\frac {\mu^{n-\alpha}} {(2\pi)^{n+1}}\\
&\times \sum_{k=0}^{\infty}
{(n+2k)^{\alpha -(n+1)}}
\left(
{e^{-i { \lambda_{k}(\mu)} t}}
\Lambda_{k}^{\lambda_{k}(\mu)}
f^{({
 {\lambda_{k}(\mu)}
})} (z)
+
{e^{i { \lambda_{k}(\mu)} t}}
\Lambda_{k}^{-\lambda_{k}(\mu)}
f^{({
 {-\lambda_{k}(\mu)}
})} (z)
\right)
\end{align*}
and prove estimates like those in the theorem for $\alpha$ small.
It is easy to see that this yields 
the spectral resolution
of the operator 
$$
\left(\frac{n+1 -\alpha}{n+1}\right)^{\frac1{n+1-\alpha}}
 |T|^{\frac\alpha{n+1-\alpha}} L.
$$

Then, retracing the argument that lead
to \eqref{stima astratta 21},
we obtain
\begin{align*}
\notag
\left
\| \partial_{t}^{-\alpha}
\mathcal P^L_{\mu} f
\right\|_{L^{\infty}_{t}L^{q}_{z}}
&\leq
C
{\mu^{n \left(\frac 1p - \frac 1q\right)-
\alpha}} \left(
\sum_{k=0}^{\infty}
{(2k+n)^{\alpha+
\gamma\left(\frac1p  \right) +\gamma\left(\frac 1 {q'} \right)
-n\left(\frac 1p - \frac 1q \right) - 1}
}
\right)
\| f \|_{L^{1}_{t}L^{p}_{z}}.
\end{align*}
From this estimate, using Lemma \ref{convergenza della serie},
we obtain the following theorem.
We omit the argument which is similar
to that of the previous theorem.

\begin{theorem}
The estimate
\begin{align}\label{stima con le derivate negative}
\left
\| \partial_{t}^{-\alpha}
\mathcal P^L_{\mu} f
\right\|_{L^{\infty}_{t}L^{q}_{z}}
\leq C
{\mu^{n \left(\frac 1p - \frac 1q\right)
-\alpha}}\| f \|_{L^{1}_{t}L^{p}_{z}}
\end{align}
holds:
\begin{enumerate}[(I)]

\item
When
$
\alpha <
\frac{2n+1}{2}
\left(
\frac{1}{p'_{*}} -\frac{1}{q}
\right)+1$ if $1 \leq p \leq p_{*}$
and $2 \leq q \leq p_{*}'$.

\item
When
$
\alpha <\frac{2n+1}2 \left( \frac 1p- 
\frac1{p_{*}}\right)+1$ if $p_{*} \leq p \leq 2$ and $p_{*}' \leq q \leq \infty$.

\item
When $\alpha < 1$ if $1 \leq p \leq p_{*}$ and
$p_{*}' \leq q \leq \infty$.

\item
When $\alpha < \frac{2n+1}2 \left( \frac 1p -\frac 1q \right)$ if $p_{*} \leq p \leq 2$ and $2 \leq q
\leq p_{*}'$.
\end{enumerate}
\end{theorem}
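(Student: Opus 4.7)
The plan is to reduce the theorem to an immediate application of Lemma \ref{convergenza della serie}, exactly as Theorem \ref{muller improved 1} was deduced from the $\alpha = -1$ case of that lemma; the only new ingredient is a shift in the summation parameter coming from the fractional derivative $\partial_t^{-\alpha}$. First I would justify the displayed bound preceding the statement: starting from the explicit formula for $\partial_t^{-\alpha}\mathcal{P}^L_\mu f$, I would repeat the triangle-inequality argument of Proposition \ref{stima astratta}, which is unaffected by the extra polynomial factor $(n+2k)^{\alpha-(n+1)}$, then apply Lemma \ref{lemmanormaPirok} at $\lambda = \lambda_k(\mu) = \mu/(n+2k)$, and finally invoke the Koch--Ricci estimate \eqref{p,q 1}. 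Compared with \eqref{stima astratta 21}, the two modifications are the outer factor $\mu^{-\alpha}$ and a shift by $+\alpha$ in the exponent inside the series.

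The second step is to recognise the resulting series as $\mathcal{S}_{\alpha-1}$ in the notation of Lemma \ref{convergenza della serie}. Each of the four ranges (I)--(IV) of the present theorem then follows by applying the corresponding case of the lemma with its internal parameter replaced by $\alpha - 1$. Concretely, cases (I), (II), (III), (IV) of the lemma yield, respectively, cases (I), (III), (IV), (II) of the theorem, with every convergence threshold shifted upward by $1$; for instance, case (III) of the lemma delivers $\alpha - 1 < \tfrac{2n+1}{2}(\tfrac{1}{p} - \tfrac{1}{q}) - 1$, which is precisely the condition appearing in case (IV) of the statement.

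The argument is strictly parallel to that of Theorem \ref{muller improved 1}, and no essential analytical difficulty arises; the main point requiring attention is organisational, namely keeping the bookkeeping across the four $(p,q)$-regions consistent. A short consistency check at the boundary $p = p_*$, relying on the identity $\tfrac{2n+1}{2}\bigl(\tfrac{1}{p_*} - \tfrac{1}{p'_*}\bigr) = 1$, confirms that the four thresholds agree where the regions overlap, so no discontinuity is introduced by the different form of the convergence condition in the various ranges.
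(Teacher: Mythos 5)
Your proposal is correct and follows exactly the route the paper takes: retrace the argument leading to the series bound for $\mathcal P^L_\mu$, observe that the fractional derivative contributes the factor $\mu^{-\alpha}(2k+n)^{\alpha}$, recognise the resulting series as $\mathcal S_{\alpha-1}$, and apply Lemma~\ref{convergenza della serie} case by case. Your permutation of cases (lemma (I),(II),(III),(IV) giving theorem (I),(III),(IV),(II)) and the boundary check via $\tfrac{2n+1}{2}\bigl(\tfrac{1}{p_*}-\tfrac{1}{p_*'}\bigr)=1$ are both accurate; in fact you supply more detail than the paper, which simply notes the argument is similar to the previous theorem and omits it.
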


We remark that according to the above theorem  
$\alpha$ need always to be
strictly smaller than $1$.
This is consistent with M\"uller's 
 counterexample showing that
in the estimate \eqref{teorema di muller}
one cannot find nothing better than
the $L^{\infty}$-norm in the central variable.
Indeed, if we had \eqref{stima con le derivate negative} with $\alpha=1$, then
the $t$-antiderivative of $\mathcal P^{L}_{\mu} f$ would be bounded.
\end{remark}

\medskip

\section{Restriction estimates
for the full Laplacian on the Heisenberg group}

Similar to what we have done so far for the sublaplacian, we now study
the case of the  full Laplacian on $\HH_{n}$.
We  consider here only the estimates
for $q=2$. A more detailed discussion of restriction estimates for the full Laplacian 
in the more general framework of M\'etivier groups may be found in \cite{CaCia3}.

\begin{theorem}
For $1\leq p \leq p_{*}$
we have
\begin{equation}
\label{laplacianopieno1}
\left
\|\mathcal P^{\Delta_{\HH}}_{\mu} f
\right\|_{L^{\infty}_{t}L^{2}_{z}}
\leq
C \mu^{n \left(\frac 1p - \frac 12 \right)-\frac14}\,
\| f \|_{L^{1}_{t}L^{p}_{z}}
\end{equation}
and
for $p_{*} \leq p \leq 2$
we have
\begin{equation}
\label{laplacianopieno2}
\left
\|\mathcal P^{\Delta_{\HH}}_{\mu} f
\right\|_{L^{\infty}_{t}L^{2}_{z}}
\leq
C \mu^{\frac{2n-1}4
\left( \frac 1p - \frac 12
\right)
}\,
\| f \|_{L^{1}_{t}L^{p}_{z}}.
\end{equation}
\end{theorem}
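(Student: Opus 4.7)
The plan is to apply the conditional estimate of Proposition~\ref{stima astratta} with $q=2$, combined with the Koch--Ricci bounds \eqref{sogge 1}, and then to analyse the resulting series by means of the explicit description of $\lambda_k(\mu)$ coming from \eqref{lambda per il laplaciano completo}. With $m(\mu,\lambda)=\mu+\lambda^{2}$ one has
\[
\lambda_k(\mu)=\tfrac{1}{2}\sqrt{4\mu+(2k+n)^{2}}-\tfrac{2k+n}{2},\qquad \lambda'_k(\mu)=\frac{1}{\sqrt{4\mu+(2k+n)^{2}}}.
\]
The decisive qualitative feature driving the computation is the transition at $2k+n\sim\sqrt{\mu}$: in the low-frequency regime $2k+n\le\sqrt{\mu}$ one has $\lambda_k(\mu)\sim\sqrt{\mu}$ and $\lambda'_k(\mu)\sim 1/\sqrt{\mu}$, while in the high-frequency regime $2k+n>\sqrt{\mu}$ one recovers the sublaplacian-type asymptotics $\lambda_k(\mu)\sim\mu/(2k+n)$ and $\lambda'_k(\mu)\sim 1/(2k+n)$.

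Setting $\beta:=n(1/p-1/2)$ and $\gamma:=\gamma(1/p)$, Proposition~\ref{stima astratta} and \eqref{sogge 1} together give
\[
\|\mathcal{P}^{\Delta_{\HH}}_{\mu}f\|_{L^{\infty}_{t}L^{2}_{z}}\lesssim\|f\|_{L^{1}_{t}L^{p}_{z}}\sum_{k=0}^{\infty}\lambda_k(\mu)^{\beta}\,\lambda'_k(\mu)\,(2k+n)^{\gamma}.
\]
I would then split the series at the dyadic threshold $2k+n\sim\sqrt{\mu}$. Using the two asymptotic regimes recalled above, the low-frequency portion is controlled by $\mu^{(\beta-1)/2}\sum_{2k+n\le\sqrt{\mu}}(2k+n)^{\gamma}\sim\mu^{(\beta+\gamma)/2}$ (provided $\gamma>-1$), and the high-frequency tail by $\mu^{\beta}\sum_{2k+n>\sqrt{\mu}}(2k+n)^{\gamma-\beta-1}\sim\mu^{(\beta+\gamma)/2}$ (provided $\gamma<\beta$). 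Both pieces therefore contribute at the common rate $\mu^{(\beta+\gamma)/2}$.

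The two stated bounds then follow by computing $(\beta+\gamma)/2$ in each of the two piecewise-linear regimes of $\gamma$. For $1\le p\le p_{*}$ one has $\gamma=\beta-1/2$, so $(\beta+\gamma)/2=n(1/p-1/2)-1/4$, which is exactly the exponent in \eqref{laplacianopieno1}; for $p_{*}\le p\le 2$ one has $\gamma=-\tfrac{1}{2}(1/p-1/2)$, hence $\beta+\gamma=(n-1/2)(1/p-1/2)$, giving the exponent $\tfrac{2n-1}{4}(1/p-1/2)$ of \eqref{laplacianopieno2}. The main obstacle is the book-keeping required to verify the convergence conditions $\gamma>-1$ and $\gamma<\beta$ strictly on each subrange and to handle the geometric series uniformly in $\mu$; the only genuinely delicate point is the endpoint $p=2$ of \eqref{laplacianopieno2}, where $\gamma=\beta=0$ and the tail sum $\sum\lambda'_k(\mu)$ diverges logarithmically, so that a separate endpoint argument based on Plancherel and on $\|\Lambda_k\|_{L^{2}\to L^{2}}=1$ must be used there.
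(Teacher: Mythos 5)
Your proposal follows essentially the same route as the paper's own proof: apply the conditional estimate of Proposition~\ref{stima astratta} with $q=2$, insert the explicit expression \eqref{lambda per il laplaciano completo} for $\lambda_k(\mu)$, split the series at the threshold $2k+n\sim\sqrt{\mu}$, and use the Koch--Ricci bounds \eqref{sogge 1} on each piece. The only cosmetic difference is that the paper first rationalizes $\sqrt{4\mu+(2k+n)^{2}}-(2k+n)$ before splitting, whereas you work directly with the two-regime asymptotics of $\lambda_k$ and $\lambda'_k$; these are equivalent, and your exponent arithmetic ($(\beta+\gamma)/2$ in each piecewise-linear range of $\gamma$) reproduces the paper's results.

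You correctly flag a genuine delicacy at the endpoint $p=2$ that the paper's written proof does not acknowledge: there $\gamma=\beta=0$, so the convergence condition $\gamma<\beta$ fails and the high-frequency tail $\sum_{2k+n>2\sqrt\mu}(2k+n)^{-1}$ diverges logarithmically; the paper's final displayed bound for $II$ is only valid for $p<2$. However, your suggested remedy --- ``a separate endpoint argument based on Plancherel and on $\|\Lambda_k\|_{L^2\to L^2}=1$'' --- is left at the level of a hint and it is not clear it closes the gap: feeding $\|\Lambda_k\|_{L^2\to L^2}=1$ back into the conditional estimate still produces the divergent series $\sum_k\lambda'_k(\mu)$, and since the $\Lambda_k^{\lambda_k(\mu)}$ project at \emph{different} twisting parameters $\lambda_k(\mu)$ there is no obvious orthogonality across $k$ to exploit in $L^2_z$ for a fixed $t$. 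So either the range in \eqref{laplacianopieno2} should be $p_*\leq p<2$, or an argument not present in either your proposal or the paper is required at $p=2$.
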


\begin{proof}
Plugging \eqref{lambda per il
laplaciano completo} in 
\eqref{stima astratta 11} we obtain
\begin{align*}
\left
\|\mathcal P^{\Delta_{\HH}}_{\mu} f
\right\|_{L^{\infty}_{t}L^{2}_{z}}
&\leq
C \| f \|_{L^{1}_{t}L^{p}_{z}}
\times
\\
&\times
\left(
\sum_{k=0}^{\infty}
\|\Lambda_{k}
\|_{L^{p}(\CC^{n}) \rightarrow L^{2}(\CC^{n})}
\frac{\left(
\sqrt{4 \mu + (2k+n)^{2}}
- 2k-n
\right)^{n\left(\frac1p - \frac12\right)}}{\sqrt{4 \mu + (2k+n)^{2}}}
\right)
\\
&\leq
C 
\mu^{n \left(\frac 1p - \frac 12 \right)}\,
\| f \|_{L^{1}_{t}L^{p}_{z}}
\times
\\
&\times
\left(
\sum_{k=0}^{\infty}
\frac{
\|\Lambda_{k}
\|_{L^{p}(\CC^{n}) \rightarrow L^{2}(\CC^{n})}
}
{
\sqrt{4 \mu + (2k+n)^{2}}
}
\left(
\sqrt{
4 \mu + (2k+n)^{2}}
+ 2k+n
\right)^{-n\left(\frac1p - \frac12\right)}
\right).
\end{align*}

We split the sum into the sum over
those $k$ such that
$2k+n\leq 2\sqrt \mu$ and those
such that $2k+n>2\sqrt \mu$.
Then we control the first term, say $I$,
by
\begin{align*}
I \leq C
\mu^{n \left(\frac 1p - \frac 12 \right)}\,
\| f \|_{L^{1}_{t}L^{p}_{z}}
\mu^{-\frac n2
\left(\frac1p - \frac12\right)-\frac12}
\left(
\sum_{2k+n\leq 2 \sqrt \mu}
\|\Lambda_{k}
\|_{L^{p}(\CC^{n}) \rightarrow L^{2}(\CC^{n})}
\right)
\\
\leq C
\mu^{\frac n2
\left(\frac1p - \frac12\right)-\frac12}
\| f \|_{L^{1}_{t}L^{p}_{z}}
\left(
\sum_{2k+n\leq 2 \sqrt \mu}
\|\Lambda_{k}
\|_{L^{p}(\CC^{n}) \rightarrow L^{2}(\CC^{n})}
\right)
\end{align*}
and the second, say $II$, by
\begin{align*}
II
\leq C
\mu^{n \left(\frac 1p - \frac 12 \right)}\,
\| f \|_{L^{1}_{t}L^{p}_{z}}
\left(
\sum_{2k+n\geq 2 \sqrt \mu}
\frac{
\|\Lambda_{k}
\|_{L^{p}(\CC^{n}) \rightarrow L^{2}(\CC^{n})}
}
{
(2k+n)^{1 + n \left( \frac 1p - \frac 12
\right)}
}
\right).
\end{align*}

When $1\leq p\leq p_{*}$
by \eqref{sogge 1} we have
\begin{align*}
I
&\leq C
\mu^{\frac n2
\left(\frac1p - \frac12\right)-\frac12}
\| f \|_{L^{1}_{t}L^{p}_{z}}
\left(
\sum_{2k+n\leq 2 \sqrt \mu}
(2k+n)^{n\left(\frac1p - \frac 12\right)-\frac12}
\right)
\\
&\leq C
\mu^{n
\left(\frac1p - \frac12\right)-\frac14}
\| f \|_{L^{1}_{t}L^{p}_{z}}
\end{align*}
and
\begin{align*}
II 
&\leq C
\mu^{n \left(\frac 1p - \frac 12 \right)}\,
\| f \|_{L^{1}_{t}L^{p}_{z}}
\left(
\sum_{2k+n\geq 2 \sqrt \mu}
(2k+n)^{- \frac 32}
\right)
\\
&\leq C
\mu^{n \left(\frac 1p - \frac 12 \right)-\frac14}\,
\| f \|_{L^{1}_{t}L^{p}_{z}}
\end{align*}
proving \eqref{laplacianopieno1}.


When $p_{*} \leq p \leq 2$ we have
\begin{align*}
I
&\leq C
\mu^{\frac n2
\left(\frac1p - \frac12\right)-\frac12}
\| f \|_{L^{1}_{t}L^{p}_{z}}
\left(
\sum_{2k+n\leq 2 \sqrt \mu}
(2k+n)^{\frac12 \left( \frac 12
- \frac1p\right)}
\right)
\\
&\leq C
\mu^{\frac {(2n-1)}4\left(\frac1p - \frac12\right)}
\| f \|_{L^{1}_{t}L^{p}_{z}}
\end{align*}
and
\begin{align*}
II 
&\leq C
\mu^{n \left(\frac 1p - \frac 12 \right)}\,
\| f \|_{L^{1}_{t}L^{p}_{z}}
\left(
\sum_{2k+n\geq 2 \sqrt \mu}
(2k+n)^{-1 - \frac{2n+1}2 \left( \frac 1p - \frac 12
\right)}
\right)
\\
&\leq C
\mu^{\frac{2n-1}4
\left( \frac 1p - \frac 12
\right)
}\,
\| f \|_{L^{1}_{t}L^{p}_{z}}
\end{align*}
proving \eqref{laplacianopieno2}.
\end{proof}

\section{Spectral resolution of the
sublaplacian on M\'etivier groups }\medskip

Let $G$ be a connected, simply connected, 
two-step nilpotent Lie group,
with Lie algebra 
$\mathfrak{g}$.
We denote
the centre of 
$\mathfrak{g}$ by $\mathfrak{z}$
and
set $\dim \mathfrak{z}=d$.
If $\omega\in \mathfrak{z}^*$, the dual of $\mathfrak{z}$,
we define
$$\mathfrak{g}_{\omega}=
\mathfrak{g}/\text{ker}\,\omega\,.$$
Since $\text{ker}\,\omega$, being a subspace of the centre,
is an ideal in $\mathfrak{g}$,
$\mathfrak{g}_{\omega}$ is a Lie algebra. 
The connected simply connected subgroup of $G$ with Lie algebra $\mathfrak{g}_{\omega}$
will be denoted by 
$G_{\omega}$. 

Let $\mathfrak v$ be a complement 
of $\mathfrak z$ in $\mathfrak g$.
We assume that
$G$ satisfies a non-degeneracy condition, which is expressed in terms of the bilinear application 
$B_{\omega}{(X,Y)} = \omega([X, Y])$, with $X$, $Y$ in $\mathfrak{v}$ and $\omega$ in $S$. Recall that
$B_{\omega}$ is {\it non-degenerate} if
the space
$\{ V \in \mathfrak{v} : \text{$\omega([V, U]) = 0$
for all $U$ in $\mathfrak{v}$} \}$
is trivial.

\begin{definition}\label{gruppo di Metivier} \cite{Me}
We say that $G$ is a 
{\it{M\'etivier group}} if $B_{\omega}$ is non-degenerate for all $\omega\neq 0$.
\end{definition}

In this case the dimension of $\mathfrak{v}$ is even, say
$\dim \mathfrak{v} = 2n$, and
$G_{\omega}$ is isomorphic to the Heisenberg group $\HH_n$ with Lie algebra $\mathfrak{h}_{n}
= \RR \oplus \mathfrak{v}_{n}$,
$\mathfrak{v}_{n} = \RR^{2n}$.
Moreover, $\mathfrak v$
generates $\mathfrak g$
as a Lie algebra.



Only for notational convenience we introduce an
inner product $\langle \cdot, \cdot \rangle$ on
$\mathfrak g$, with the property that $\mathfrak z$ and
$\mathfrak v$ are orthogonal subspaces.
Let $| \cdot |$ denote the norm induced
 by
 $\langle \cdot, \cdot\rangle$ on $\mathfrak{z}^*$, the dual of $\mathfrak{z}$. 
We call $S$ the unit sphere in 
$\mathfrak{z}^*$, that is,
\begin{equation*}
S:=\{\omega\in\mathfrak{z}^*\,:\,
|\omega|=1
\}\,.
\end{equation*}
For any fixed $\omega$ in $S$
there is an element $Z_{\omega}$ in $\mathfrak{z}$ 
such that
$\omega (Z_{\omega})=1$ and  $|Z_{\omega}|=1$.
Indeed, by definition
$$
|\omega|=\sup_{|Z|=1}|\omega (Z)|\,
$$
and, since $\omega$ is continuous,
there exists $Z_\omega\in\mathfrak{z}$
such that
$|Z_\omega|=1$ and 
$\omega (Z_\omega)=1$.

The centre of the Lie algebra 
decomposes into the sum
\begin{equation}\label{decomposizionecentro}
\mathfrak{z}=\RR Z_{\omega}\oplus
\ker
{\omega}\,.\end{equation}
Observe that for every $Z\in\ker
\omega$
we have $\langle Z_{\omega},Z \rangle =0$.
We shall systematically identify the quotient $\mathfrak{z}{/\text{ker\,}\omega}$
with $\RR Z_{\omega}$.
Then $\RR Z_{\omega}\oplus \mathfrak{v}$
is a Lie algebra isomorphic to $\mathfrak{g}_{\omega}$.



Since $\mathfrak{g}$ is nilpotent, the exponential map, $\exp : \mathfrak{g} \rightarrow G$, is surjective.
Thus we may
parametrize $G$ by
$\mathfrak{v}\oplus\mathfrak{z}$,
endowing it with the exponential coordinates. More precisely,
we fix a basis $\{Z_1, \ldots, Z_{d},V_1, \ldots, V_{2n}\}$
 of $\mathfrak g$, with $\{Z_1, \ldots, Z_{d}\}$ 
a basis of $\mathfrak{z}$ and
$\{V_1, \ldots, V_{2n}\}$ a basis 
of $\mathfrak{v}$,
 and identify
a  point $g$ of $G$  with the point
$(V,Z)$ in $\RR^{k}\times \RR^d$,
such that
$$
g = \exp (V,Z)=
\exp \Big(
\sum_{j=1}^{2n}v_j V_j
+\sum_{a=1}^{d} z_a Z_a\Big).
$$
In these coordinates the product law 
 is given by the Baker-Campbell-Hausdorff
 formula
\begin{equation*}
(V,Z)(V',Z')=
\Big(V+V', Z+Z'+\frac{1}{2}[V,V']\Big)\,,
\end{equation*}
for all $V, V'\in\mathfrak{v}$ and $Z,Z'\in\mathfrak{z}$.

If we denote by $dV$ and $dZ$
the Lebesgue measures
on $\mathfrak{v}$ and $\mathfrak{z}$
respectively,
then the product measure
$dVdZ$ is a left-invariant Haar measure
on $G$. 
We shall denote by $L^{p}(G)$ the corresponding Lebesgue spaces.

Finally, we call $\cS (G)$ the Schwartz space on $G$, that is, the space of functions $f$
on $G$ such that $f\circ \exp$ belongs to
the usual Schwartz space on the Euclidean space $\mathfrak{g}$.


To any vector
in $\mathfrak g$, thought
of as the tangent space to $G$ at
the origin, we associate a left-invariant vector field
on $G$.
If $f\in \cS (G)$,
$V=\sum_{j=1}^{2n}v_j V_j$,
$T=\sum_{a=1}^{d}z_a Z_a$,
we set
\begin{align*}
\tilde{V_j} f (V,T)&=
\frac{d}{ds}f \Big(
(V,T)(sV_j,0)
\Big)\Big|_{s=0}
\\
&=
\frac{\partial f}{\partial v_j}(V,T)
+\frac{1}{2}
\sum_{a=1}^{d}
\langle
Z_a, [V, V_j]\rangle
\frac{\partial f}{\partial z_a}(V,T)\,,
\end{align*}
and
\begin{align*}
\tilde{T_{a}} f (V,T)&=
\frac{d}{ds}f \Big(
(V,T)(0,s T_{a})
\Big)\Big|_{s=0}\\
&=
\frac{\partial f}{\partial z_a}(V,T)\,.
\end{align*}
Then the vectors fields
\begin{equation*}
\tilde{V_j} =
\frac{\partial }{\partial v_j}
+\frac{1}{2}
\sum_{a=1}^{d}
\langle
Z_a, [V, V_j]\rangle
\frac{\partial }{\partial z_a}\,,
\qquad
\tilde{T_{a}} = \frac{\partial }{\partial z_a}\,
\end{equation*}
are left invariant.

In terms of these vectors we define
the sublaplacian
\begin{equation*}\label{subla}
L=-\tilde{V}_1^2-\cdots-\tilde{V}_{2n}^2\,,
\end{equation*}
the Laplacian on the centre
\begin{equation*}
\Delta_{\mathfrak z} = -\tilde{T}_1^2-\cdots-\tilde{T}_{d}^2\,,
\end{equation*}
and the full Laplacian
\begin{equation*}
\Delta_{G} = L + \Delta_{\mathfrak z}\,.
\end{equation*}
The operators $L$ and $\Delta_{G}$ are  positive and essentially self-adjoint
on $L^{2}(G)$.
Moreover, since the set of vector fields
$\{\tilde{V_1}, \tilde{V_2},\ldots, \tilde{V}_{2n}\}$
generates $\mathfrak{g}$ as a Lie algebra, $L$ and $\Delta_{G}$ are
hypoelliptic.

\medskip

We will obtain the spectral decompositions of $L$ and $\Delta_{G}$
from those
of the sublaplacian $L_{\HH}$
and of the full Laplacian $\Delta_{\HH}$
on the Heisenberg group $\HH_{n}$,
by means of a partial Radon transform in the central variables.

\begin{definition}
For any 
function $f$ in
 $\cS (G)$
and for $\omega\in S$,
we set
\begin{equation*}\label{erreomega}
R_{\omega} f(V,t):=
\int_{\{ Z'\in\ker \omega\}}
f(V, t Z_{\omega}+Z')\,dZ'\,,
\end{equation*}
where $dZ'$ denotes the Lebesgue measure on
the hyperplane $\ker \omega$ in $\mathfrak{z}$.
\end{definition}

For each $\omega$ in $S$,
$R_{\omega}f$ is a function
on the subgroup $G_{\omega}$,
which is isomorphic to $\HH_{n}$.
 According to the Euclidean theory,
 the family of functions
 $\{ R_{\omega}f\}_{\omega \in S}$
 completely determines $f$.

Fix $\omega$ in $S$.
If we choose the basis of $\mathfrak z$ in such a way that $Z_1=Z_\omega$, we have
\begin{equation}\label{Radon di Za}
\int_{\text{ker}\omega}
\frac{\partial f }{\partial z_a}
(V, t Z_{\omega}
+Z')dZ'=0,
\end{equation}
for all $a= 2,\ldots,d$, since
$
f(V,Z)$
vanishes as the norm
${|Z|}$ goes to infinity. 
Moreover,
$$
\int_{\text{ker}\omega}
\frac{\partial f }{\partial t}
(V, t Z_{\omega}
+Z')dZ'
=
\frac{\partial }{\partial t}
\int_{\text{ker}\omega}
f (V, t Z_{\omega}
+Z')dZ'
=
\frac{\partial }{\partial t}
R_{\omega} f(V,t).
$$
Hence, setting $T^{(\omega)} =
\frac{\partial }{\partial t}$, we have
$$
R_{\omega} \big(\tilde{Z_{1}} f\big)(V,t)
=
\frac{\partial }{\partial t}
R_{\omega} f(V,t)
=
T^{(\omega)}
R_{\omega} f(V,t)
$$
and
$$
R_{\omega} \big(\tilde{Z_{a}} f\big)(V,t)
= 0,
$$
for all $a= 2,\ldots,d$.

We have also
\begin{align*}
R_{\omega}\left(
\tilde{V_j} f
\right)(V,t)
&=
\int_{\text{ker}\omega}
\big(\tilde{V_j} f
\big)
(V, t Z_{1}+Z')
dZ'
\\
&=
\left(
\frac{\partial }{\partial v_j}
 R_{\omega}f
 \right)
 (V,t)
+\frac{1}{2}
\langle
Z_\omega, [V, V_j]\rangle
\left(
\frac{\partial }{\partial z_1}
R_{\omega}
f
\right)
(V,t)\\
&\qquad +\frac{1}{2}
\sum_{a=2}^{d}
\langle
Z_a, [V, V_j]\rangle
\int_{\text{ker}\omega}
\frac{\partial f }{\partial z_a}
(V, t Z_{\omega}
+Z')dZ'\,.
\end{align*}
Since
$\omega (Z)=\langle Z_\omega, Z\rangle$
for $Z\in \mathfrak{z}$,
we have $\langle
Z_\omega, [V, V_j]\rangle =
\omega ( [V, V_j] )$.
Then it follows from \eqref{Radon di Za}
\begin{align}\label{romega}
R_{\omega}\left(
\tilde{V_j} f
\right)
(V,t)
=
\left(
\frac{\partial }{\partial v_j}
+\frac{1}{2}
\omega\big(
 [V, V_j]\big)
\frac{\partial }{\partial t}
\right)
\big(R_{\omega}
f\big)
(V,t)\,.
\end{align}
Setting
\begin{equation*}
V_j^{(\omega)}:=
\frac{\partial }{\partial v_j}
+\frac{1}{2}
\omega\big(
 [V, V_j]\big)
\frac{\partial }{\partial t}\,,\qquad{j=1,\ldots,k},
\end{equation*}
\eqref{romega} is equivalent to the
following commutation relation
\begin{equation}\label{def dei campi proiettati}
R_{\omega}\left(
\tilde{V_j} f
\right)
= V_j^{(\omega)}
\left( R_{\omega} f \right),
\end{equation}
for all $f$ in $\cS(G)$.
In other words the vector fields $V_j^{(\omega)}$
are the projections on the group $G_\omega$
of the vectors $\tilde{V_j}$.

The vector fields
$\{V_1^{(\omega)},\ldots, V_{2n}^{(\omega)}\}$ together with
$T^{(\omega)} = \frac{\partial }{\partial t}$
yield a basis of left-invariant 
fields on the group $G_\omega$.
In terms of them 
 we introduce the sublaplacian
\begin{equation}
\label{sublaomega}
L^{(\omega)}=-\big(V_1^{(\omega)}\big)^2-\ldots-\big(V_{k}^{(\omega)}\big)^2\,,
\end{equation}
 on $G_\omega$.
We then have
\begin{equation}\label{eguaRomega}
R_\omega (Lf)=L^{(\omega)}(R_\omega f),
\end{equation}
for all functions $f\in\cS(G)$.
From this relation
we may easily obtain the spectral 
decomposition of $f$ from that
of $R_{\omega} f$ by means of
the inverse of the Radon transform.

To carry out this plan, we
introduce the  partial Fourier transform in the central variable of a function $g$
in $\cS (G_\omega)$,
\begin{equation}\label{definizioneF1}
\mathfrak{F}_1 g(V;\lambda)=
\int_{-\infty}^{\infty}e^{i\lambda t}
g (V,t)dt\,,
\end{equation}
$\lambda\in\RR$.
An integration by parts then yields
\begin{align*}
\mathfrak{F}_1 \Big ({V_j}^{(\omega)} g\Big) (V;\lambda) &=
\int_{-\infty}^{\infty}e^{i\lambda t}
V_j^{(\omega)} g (V,t)dt
\\
&=
\left(
\frac{\partial }{\partial v_j}
- \frac{i}{2} \lambda
\omega\big(
 [V, V_j]\big)
\right)
\mathfrak{F}_1 g(V;\lambda) .
\end{align*}
Defining
\begin{equation}
\label{campi lambda omega}
V_j^{(\lambda, \omega)}:=
 \frac{\partial }{\partial v_j}
- \frac{i}{2} \lambda
\omega\big(
 [V, V_j]\big)\,,
 \qquad{j=1,\ldots,k},
\end{equation}
we have proved that
\begin{equation}
\label{campi lambda omega e F_{1}}
\mathfrak{F}_1 \left ({V_j}^{(\omega)} g \right) =
V_j^{(\lambda, \omega)} \mathfrak{F}_1 \big ( g\big),
 \qquad{j=1,\ldots,k},
\end{equation}
which implies
\begin{equation*}
\mathfrak{F}_1 \Big ({L}^{(\omega)} g\Big) (V;\lambda)
= \Delta^{(\lambda, \omega)} \mathfrak{F}_1 g (V;\lambda),
\end{equation*}
where
\begin{equation}
\label{lambda twisted laplacian}
\Delta^{(\lambda, \omega)} = - (V_1^{(\lambda, \omega)})^2 \dots - (V_{k}^{(\lambda, \omega)})^2
\end{equation}
is the {\it $\lambda$-twisted Laplacian} associated to the group $G_{\omega}$.

In the Euclidean setting it is well known that the Radon transform factorizes
the Fourier transform.
Correspondingly on the group $G$ we have for $\omega \in S$
and $\lambda \in \RR$
\begin{align}
\notag
\mathfrak{F}_1 \big(
R_\omega f\big)(V;\lambda)
&=
\int_{-\infty}^{\infty}
\int_{\ker\omega}
e^{i\lambda\,
 \omega( tZ_\omega +Z')}
f(V, tZ_\omega+Z')dZ' dt
\\
\label{radon e fourier}
&=\int_{\mathfrak{z}}
e^{i\lambda \, \omega( Z)}
f(V,Z)dZ=
\mathfrak{F}_{\mathfrak{z}}f
(V,\lambda\omega)\,,
\end{align}
where 
$\mathfrak{F}_{\mathfrak{z}} f$
denotes  the Fourier transform of  $f$ along the central variables.

Similarly, setting $\eta_a = \eta(Z_a)$ for $\eta \in\mathfrak{z}^*$,
 we obtain
\begin{align*}
\mathfrak{F}_{\mathfrak{z}}
({\tilde{V}_j f})
(V,\eta)
&=
\int_{\mathfrak{z}}
e^{i\eta (Z)}
\tilde{V}_j f
(V, Z)
dZ\\
&=
\frac{\partial }{\partial v_j}
\mathfrak{F}_{\mathfrak{z}}
{f}(V,\eta)
+ \frac{1}{2}
\sum_{a=1}^{d}
\langle
Z_a, [V_j, V] \rangle
\int_{\mathfrak{z}}
e^{i\eta(Z)}
\frac{\partial f}{\partial z_a} (V,Z)
dZ
\\
&=
\frac{\partial }{\partial v_j}
\mathfrak{F}_{\mathfrak{z}}f
(V,\eta)
-\frac{i}{2}
\sum_{a=1}^{d}\eta_a
\langle
Z_a, [  V,V_j]\rangle
(\mathfrak{F}_{\mathfrak{z}}{ f})
{(V,\eta)}
\\
&=
\frac{\partial }{\partial v_j}
\mathfrak{F}_{\mathfrak{z}}f
(V,\eta)
-\frac{i}{2}
\eta
\left ( [  V,V_j] \right )
(\mathfrak{F}_{\mathfrak{z}}{ f})
{(V,\eta)}
\\
&=
V^{\eta}_j 
(\mathfrak{F}_{\mathfrak{z}}{ f})
{(V,\eta)}\,.
\end{align*}
Defining
\begin{equation}
V^{\eta}_j =
\frac{\partial }{\partial v_j}
-\frac{i}{2}
\eta( [V, V_j] ),
\quad
j = 1, \dots, k,
\end{equation}
we have showed that
\begin{equation*}
\mathfrak{F}_{\mathfrak{z}}
({\tilde{V}_j f})
=
V^{\eta}_j 
(\mathfrak{F}_{\mathfrak{z}}{ f}).
\end{equation*}

Writing in polar coordinates $\eta$ as
$\eta = \rho \omega$,
with $\rho \geq 0$ and
$\omega$ in $S$, we clearly have
\begin{equation}
\label{che fatica}
V^{\eta}_j = V^{\rho \omega}_j = V^{\rho, \omega}_j,
\end{equation}
where the vector fields $V^{\rho, \omega}_j$ have been
defined in \eqref{campi lambda omega}.

\begin{definition}
\label{twistLaplacian}
Given $\eta \in \mathfrak{z}^{\star}$,
the {\rm{$\eta$-twisted Laplacian}}
of $G$ is
\begin{equation}
\label{etatwistLapl}
\Delta_{\eta}=
-\big(V^{\eta}_1\big)^2-\ldots-
\big(V^{\eta}_{k}\big)^2\,.
\end{equation}
\end{definition}

\begin{lemma}
Take $\eta = \rho \omega$ with
$\omega\in S$
and $\rho>0$.
If 
$L^{(\omega)}$ is the sublaplacian defined by
\eqref{sublaomega}
on
$G_{\omega}$,
we have
\begin{equation}
\label{eqLapl}
\Delta_{\rho\omega}=
\Delta^{(\rho, \omega)}=
(L^{(\omega)})^{\rho}
\,,
\end{equation}
that is, 
the ($\rho \omega$)-twisted Laplacian of $G$, defined by \eqref{etatwistLapl},
coincides 
with the $\rho$-twisted Laplacian,
$\Delta^{(\rho, \omega)}$
 of the group
$G_{\omega}$, defined by \eqref{lambda twisted laplacian}.
\end{lemma}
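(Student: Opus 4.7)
The plan is to verify both equalities by direct comparison of definitions, relying on the commutation relations already established in the excerpt.

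For the first equality $\Delta_{\rho\omega} = \Delta^{(\rho,\omega)}$, I would simply insert $\eta = \rho\omega$ into the definition of the vector fields $V_j^{\eta} = \frac{\partial}{\partial v_j} - \frac{i}{2}\eta([V,V_j])$ and observe that $\eta([V,V_j]) = \rho\,\omega([V,V_j])$, so that
\begin{equation*}
V_j^{\rho\omega} = \frac{\partial}{\partial v_j} - \frac{i}{2}\rho\,\omega([V,V_j]) = V_j^{(\rho,\omega)},
\end{equation*}
which is exactly the vector field appearing in \eqref{campi lambda omega}. This is in fact precisely the content of \eqref{che fatica}. Squaring, summing in $j$, and negating yields $\Delta_{\rho\omega} = \Delta^{(\rho,\omega)}$.

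For the second equality $\Delta^{(\rho,\omega)} = (L^{(\omega)})^{\rho}$, where the superscript $\rho$ denotes the conjugation of $L^{(\omega)}$ by the partial Fourier transform $\mathfrak{F}_{1}$ in the central variable at frequency $\rho$, I would invoke the commutation relation \eqref{campi lambda omega e F_{1}}, namely $\mathfrak{F}_1(V_j^{(\omega)} g) = V_j^{(\rho,\omega)}\mathfrak{F}_1(g)$. Iterating this once gives $\mathfrak{F}_1\bigl((V_j^{(\omega)})^{2} g\bigr) = (V_j^{(\rho,\omega)})^{2}\mathfrak{F}_1(g)$, and summing in $j$ produces $\mathfrak{F}_1 (L^{(\omega)} g) = \Delta^{(\rho,\omega)}\mathfrak{F}_1(g)$ for every Schwartz function $g$ on $G_\omega$. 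Reading this identity as saying that $\Delta^{(\rho,\omega)}$ is the operator obtained by conjugating $L^{(\omega)}$ by $\mathfrak{F}_1$ at the fixed frequency $\rho$ gives exactly $\Delta^{(\rho,\omega)} = (L^{(\omega)})^{\rho}$.

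There is no genuine obstacle: the lemma essentially records that the two constructions of the twisted Laplacian — one obtained directly on $G$ by Fourier transform along the full centre $\mathfrak z$ at the covector $\rho\omega$, the other obtained first by the Radon transform $R_\omega$ which projects $G$ onto the Heisenberg quotient $G_\omega$ and then by the one-dimensional Fourier transform $\mathfrak{F}_1$ at frequency $\rho$ — produce the same second-order differential operator on $\mathfrak v$. The mildly delicate point is only bookkeeping: keeping straight that $\omega([V,V_j]) = \langle Z_\omega,[V,V_j]\rangle$, so that the coefficient that appeared originally through $\tilde V_j$ on $G$ matches, after one application of $R_\omega$ and one of $\mathfrak F_1$, the coefficient in $V_j^{(\rho,\omega)}$.
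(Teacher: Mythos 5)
Your proof is correct and follows essentially the same path as the paper's: the first equality is read off from \eqref{che fatica} together with the definitions \eqref{lambda twisted laplacian} and \eqref{etatwistLapl}, and the second from the intertwining relation \eqref{campi lambda omega e F_{1}} iterated and summed over $j$. The only cosmetic difference is that the paper's proof reaches $V_j^{\rho\omega}=(R_\omega\tilde V_j)^\rho$ by chaining \eqref{radon e fourier}, \eqref{def dei campi proiettati}, and \eqref{campi lambda omega e F_{1}} through $R_\omega f$, whereas you apply \eqref{campi lambda omega e F_{1}} directly to an arbitrary $g$ on $G_\omega$, which is slightly more economical but not a genuinely different argument.
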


\begin{proof}
The first identity simply follows from \eqref{che fatica}
and the definitions
\eqref{lambda twisted laplacian} and
\eqref{etatwistLapl}.


For the second, using \eqref{radon e fourier},
\eqref{def dei campi proiettati},
and
\eqref{campi lambda omega e F_{1}}, which here give
\begin{align}\notag
\mathfrak{F}_1 \big(
R_\omega \tilde{V}_j f\big)(V;\rho)
&=
\mathfrak{F}_1 
(V^{(\omega)}_j   R_\omega f)
(V;\rho)
=
V^{\rho\omega}_j
\Big(\mathfrak{F}_1 \big(
R_\omega f
\big)\Big)(V; \rho)
\end{align}
we obtain
$$
V^{\rho\omega}_j =
(R_\omega \tilde{V}_j )^{\rho}
=
\frac{\partial }{\partial v_j}
-\frac{i}{2}
\rho \,\omega(\langle
 [V, V_j]\rangle),
 $$
 whence $\Delta^{(\rho, \omega)}=
(L^{(\omega)})^{\rho}$ 
  follows.\end{proof}


We write, for $j=1,\ldots, 2n$,
\begin{align*}
V^{\rho\omega}_j
&=
\frac{\partial }{\partial v_j}
-{i}{\rho}
\sum_{k=1}^{2n}
v_k\,
\omega
\big( 
[V_k,V_j]\big)\\
&=
\frac{\partial }{\partial v_j}
-{i}{\rho}
\sum_{k=1}^{2n}
v_k
B^{\omega}_{kj}\,,
\end{align*}
where
$B^{\omega}_{jk}=
\omega
\big( 
[V_j,V_k]\big)=
-B^{\omega}_{kj}$
are the entries of the
matrix $B_{\omega}$, introduced
in 
Definition~\ref{gruppo di Metivier}.

Since $B_{\omega}$ is non-degenerate and skew-symmetric, 
there exists a $2n\times 2n$ invertible matrix, $A_{\omega}$, such that
$$
B_{\omega} \left( A_{\omega} V_1,
A_{\omega} V_2 \right) = J(V_1, V_2),
$$
for all $V_{1}, V_{2}$ in $\mathfrak v$,
where $J(X,Y)$ is the standard 
symplectic form corresponding to the matrix
$$
{\JJ}_{2n}=
{\left(\begin{array}{cc}
                              0 & {{\II}}_n \\
                             {{-\II}}_n & 0 
                           \end{array}\right)}\,,
$$
where $\II_n$ 
 denotes the $n$-dimensional
 identity matrix. 
 It follows that
 \begin{equation*}
(\det A_{\omega})^2 = \frac 1{|\det B_{\omega}|}.
\end{equation*}
Since $\det B_{\omega}$, which is a polynomial function in the components of $\omega$, never vanishes on the unit
sphere,
there is a positive constant $K$ such
that
\begin{equation}\label{controllo dello jacobiano}
\frac 1 K \leq |\det B_{\omega}|^{-\frac 12}
= |\det A_{\omega}|
\leq K,
\end{equation}
for $\omega$ in $S$.

We  introduce a new set of  coordinates ${y_j^\omega}$,
defined in terms of $A_{\omega}$ by
\begin{equation*}
y_j^\omega :=
\sum_{k=1}^{2n}
(A_{\omega})_{jk}\, v_k\,,
\quad
\text{ for $j=1,\ldots, 2n$},
\end{equation*}
 where 
$(A_{\omega})_{jk}$
denotes the $(j,k)$-entry  
of $A_{\omega}$.

Correspondingly
we  define the  vector  fields
$$
Y_j^{\rho\omega}:=
\sum_{k=1}^{2n}
(A_{\omega})_{kj}
 V_k^{\rho\omega}\,,\qquad j=1,\ldots, 2n\,.
$$
When only a point $\omega$ in 
$S$ is considered, we shall 
often suppress the
index $\omega$ from $y_j^\omega$ and $Y_j^{\rho\omega}$,
simply writing $y_j$ and $Y_j^{\rho}$.

One easily checks that
$$Y_j^{\rho}=
\frac{\partial}{\partial y_j}
+i
\rho
\sum_{k=1}^{2n}
y_{k}\, 
\JJ_{jk}\,,\quad j=1,\ldots, 2n\,,
$$
or, more explicitly,
$$Y_k^{\rho}=
\frac{\partial}{\partial y_k}
+i
\rho\,
y_{k+n}\,, \quad
Y_{k+n}^{\rho}=
\frac{\partial}{\partial y_{k+n}}
-i
\rho\,
y_{k}, \quad k=1,\ldots, n\,.
$$
Therefore, the operator
\begin{align*}
\sum_{j=1}^{2n}
\big( Y^{{\rho}}_j\big)^{2}
&= - \sum_{j = 1}^{2n} {{\partial^2}\over{\partial y_j^2}}
+ i \rho \sum_{j = 1}^{n} \Big(y_j {{\partial}\over{\partial y_{j + n}}}
- y_{j + n} {{\partial}\over{\partial y_j}} \Big)
+ \rho^2 \sum_{j = 1}^{2n} y_j^2
\end{align*}
coincides with the 
$\rho$-twisted
Laplacian $\Delta_{\HH}^{\rho}$
associated to the Heisenberg group $\HH_n$.

From now on we shall consider the
coordinates $y_{j}$ as a fixed coordinate system
on $\mathfrak{v}_{n}$, the orthogonal complement
of the centre in $\mathfrak{h}_{n}$,
 interpreting
$A_{\omega}$ as a
family of linear invertible maps from $\mathfrak v$
to $\mathfrak{v}_{n}$ depending on $\omega$.
Correspondingly,
the family of vector
fields $\{Y_{j}^{\rho}\}_{j = 1}^{2n}$
will be thought of as a 
basis 
of vector fields on $\mathfrak{v}_{n}$.

\begin{proposition}
Let $\omega\in S$ and $\rho > 0$.
If $g$ is a function in $\cS (\mathfrak v)$,
set $g_{\omega} = g\circ A_{\omega}^{-1}$.
\begin{itemize}
\item[a)] The spectral projection
$\Lambda_k^{\rho \omega}$ 
onto the eigenspace
of the $\rho \omega$-twisted Laplacian,
$\Delta_{\rho \omega}$
(defined by \eqref{etatwistLapl}),
corresponding to the eigenvalue
$\rho (2k+n)$,
is given by
\begin{equation}
\label{proiettorideltaomega}
\Pi_k^{\rho \omega} g
=
\big(g_{\omega}
\times_\rho \varphi_k^\rho\big)\circ A_{\omega}
=
\big(
\Lambda^{\rho}_{k} g_{\omega}
\big)
\circ A_{\omega}\,,
\end{equation}
where $\Lambda^{\rho}_{k}$,
defined by \eqref{Lambda 1},
is the spectral projection
of the $\rho$-twisted Laplacian, $\Delta_{\HH}^{\rho}$, on the Heisenberg group.

\item[b)]
Moreover,
\begin{equation}\label{sviluppo}
g(V)= \rho^n
\sum_{k=0}^{\infty}
\big(
g_{\omega} \times_\rho
\varphi^\rho_k\big)
(A_{\omega} V)
= \rho^n
\sum_{k=0}^{\infty}
\big ( \Pi_{k}^{\rho \omega} g \big ) (V)\,.
\end{equation}
\end{itemize}
\end{proposition}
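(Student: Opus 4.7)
The plan is to exploit the fact, already established in the paragraphs preceding the statement, that the change of coordinates $y = A_{\omega} V$ on $\mathfrak{v}$ conjugates $\Delta_{\rho\omega}$ into the Heisenberg $\rho$-twisted Laplacian $\Delta_{\HH}^{\rho}$ on $\mathfrak{v}_{n}$. Concretely, the vector fields $Y_{j}^{\rho\omega} = \sum_{k} (A_{\omega})_{kj} V_{k}^{\rho\omega}$, when read in the $y$-coordinates, take the form $\partial/\partial y_{j} + i\rho \sum_{k} y_{k}\JJ_{jk}$, and the computation just before the statement shows that $-\sum_{j}(Y_{j}^{\rho\omega})^{2}$ equals $\Delta_{\HH}^{\rho}$. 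Thus $\Delta_{\rho\omega}$ on $\mathfrak{v}$ and $\Delta_{\HH}^{\rho}$ on $\mathfrak{v}_{n}$ are related by the pullback $g \mapsto g_{\omega} = g\circ A_{\omega}^{-1}$: for every Schwartz $g$ on $\mathfrak{v}$ we have $(\Delta_{\rho\omega}g)\circ A_{\omega}^{-1} = \Delta_{\HH}^{\rho}\, g_{\omega}$, so the two operators are spectrally equivalent and share the eigenvalues $\rho(2k+n)$.

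For part (a) I would then argue as follows. Since the pullback $(\,\cdot\,)\circ A_{\omega}$ intertwines the two operators, it also intertwines their spectral projections onto the $\rho(2k+n)$-eigenspace. On the Heisenberg side this projection is by definition $\Lambda_{k}^{\rho} g_{\omega} = \frac{1}{(2\pi)^{n}} g_{\omega}\times_{\rho}\varphi_{k}^{\rho}$, hence
\begin{equation*}
\Pi_{k}^{\rho\omega} g \;=\; \bigl(\Lambda_{k}^{\rho} g_{\omega}\bigr)\circ A_{\omega} \;=\; \bigl(g_{\omega}\times_{\rho}\varphi_{k}^{\rho}\bigr)\circ A_{\omega},
\end{equation*}
yielding \eqref{proiettorideltaomega}. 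One may verify this directly by checking that the right-hand side is annihilated by $\Delta_{\rho\omega} - \rho(2k+n)I$, using the intertwining and the known eigenfunction property $\Delta_{\HH}^{\rho}(\Lambda_{k}^{\rho}g_{\omega}) = \rho(2k+n)\Lambda_{k}^{\rho}g_{\omega}$ recalled in Section~2.

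For part (b) I would apply the spectral completeness relation on the Heisenberg group, namely $h = \sum_{k}\Lambda_{k}^{\rho} h$ for every Schwartz $h$ on $\mathfrak{v}_{n}$, to $h = g_{\omega}$ and then evaluate at $y = A_{\omega} V$:
\begin{equation*}
g(V) = g_{\omega}(A_{\omega}V) = \sum_{k=0}^{\infty}\bigl(\Lambda_{k}^{\rho}g_{\omega}\bigr)(A_{\omega}V) = \sum_{k=0}^{\infty}\bigl(\Pi_{k}^{\rho\omega} g\bigr)(V).
\end{equation*}
The only delicate point is the explicit constant $\rho^{n}$ appearing in \eqref{sviluppo}: this is exactly the price one pays for writing $\Lambda_{k}^{\rho}$ through the twisted convolution with the unnormalized Laguerre function $\varphi_{k}^{\rho}(z) = \tilde\varphi_{k}(\sqrt{\rho}\,|z|)$, whose $L^{2}$-mass scales like $\rho^{-n}$. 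I expect this bookkeeping of constants, together with the fact that $A_{\omega}$ is not in general volume-preserving (so that one must invoke \eqref{controllo dello jacobiano} to confirm convergence estimates uniform in $\omega\in S$), to be the only real obstacle; the structural part of the proof is entirely determined by the conjugation argument.
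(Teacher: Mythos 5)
Your argument is essentially the same as the paper's: both proofs exploit the conjugation identity $\Delta_{\rho\omega}(g_\omega\circ A_\omega)=(\Delta_\HH^\rho g_\omega)\circ A_\omega$, deduce from it that $(\Lambda_k^\rho g_\omega)\circ A_\omega$ is a $\rho(2k+n)$-eigenfunction of $\Delta_{\rho\omega}$, and then obtain the expansion by transporting the Heisenberg completeness relation through $A_\omega$. This is also precisely what the paper does, and your presentation is if anything a little more explicit about the intertwining of the spectral projections.

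Two small remarks. First, the way you state the derivation of part (b) -- evaluating $g_\omega=\sum_k\Lambda_k^\rho g_\omega$ at $A_\omega V$ -- produces $g=\sum_k\Pi_k^{\rho\omega}g$ with no $\rho^n$, whereas \eqref{sviluppo} carries a $\rho^n$. You correctly flag this as a normalization issue stemming from the Laguerre scaling $\varphi_k^\rho(z)=\tilde\varphi_k(\sqrt\rho\,|z|)$; concretely, with the convention \eqref{Lambda 1} the operators $\Lambda_k^\rho$ satisfy $\Lambda_k^\rho\Lambda_k^\rho=\rho^{-n}\Lambda_k^\rho$, so the genuine resolution of the identity reads $g=\rho^n\sum_k\Lambda_k^\rho g$, which is exactly the source of the factor $\rho^n$. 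The paper's proof is equally terse on this point (it only says ``similarly''), so your level of detail is on par. Second, your worry about $A_\omega$ not being volume-preserving and needing \eqref{controllo dello jacobiano} is not relevant here: the proposition asserts pointwise algebraic identities, so the Jacobian of $A_\omega$ plays no role; it only matters later, in Lemma~\ref{lemmanormaPirokomega}, where $L^p$ norms of the composed maps are compared.
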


\begin{proof}
Since 
$$
V_k^{\rho\omega}
=
\sum_{j=1}^{2n}
(A_{\omega})_{kj}
Y^{\rho}_j\,,
$$
we have
\begin{equation*}
\Delta_{\rho \omega} \big( g \circ A_{\omega}\big)=
\big(
\Delta^{\rho}_{\HH} g \big)
\circ A_{\omega}\,.
\end{equation*}
Thus, writing $g 
=g_{\omega} \circ A_{\omega}$,
we obtain
\begin{equation*}\label{formulafondamentale}
\Delta_{\rho \omega} g =
\Delta_{\rho \omega} \big( g_{\omega} \circ A_{\omega}\big)=
\big(
\Delta^{\rho}_{\HH} g_{\omega} \big)
\circ A_{\omega}\,,
\end{equation*}
which implies
\begin{align*}
\Delta_{\rho \omega} \Big( 
\big(
\Lambda^{\rho}_{k} g_{\omega}
\big)\circ A_{\omega}\Big)&=
\Big(\Delta^{\rho}_{\HH}
\big(
\Lambda^{\rho}_{k} g_{\omega}
\big)\Big)\circ A_{\omega}\notag\\
&=
\rho (2k+n)
\big(
\Lambda^{\rho}_{k} g_{\omega}\big)\circ A_{\omega}\,.
\end{align*}

Similarly, we obtain
\eqref{sviluppo}
\begin{equation*}\label{sviluppo1}
g(V)= \rho^n
\sum_{k=0}^{\infty}
\big(
\Lambda^{\rho}_{k} g_{\omega}
\big)
(A_{\omega} V)
= \rho^n
\sum_{k=0}^{\infty}
\big(
g_{\omega} \times_\rho
\varphi^\rho_k
\big)
(A_{\omega} V)\,.
\end{equation*}
This 
is the
expansion 
of $g$ in terms of the eigenfunctions
of the $\rho\omega$-twisted Laplacian
and the spectral projection
onto the eigenspace
associated to
the eigenvalue $\rho (2k+n)$ of $\Delta^{\rho\omega}$ is
thus given by
\eqref{proiettorideltaomega}.
\end{proof}

\begin{remark}
For each $\omega \in S$
the function $g_{\omega}
= g \circ A_{\omega}^{-1}$ in \eqref{proiettorideltaomega}
may be thought of as a function on the
fixed reference space $\mathfrak{v}_{n}$.
\end{remark}

We shall estimate the norms of the projections
$\Pi^{\rho \omega} _k$
in terms of those of
$\Lambda _k$ by means
of the following
lemma.

\begin{lemma}\label{lemmanormaPirokomega}
Fix $\omega$ in $S$.
Suppose that $\Lambda_k :L^p (\mathfrak{v}_{n})\to L^q (\mathfrak{v}_{n})$
for some $p,q$.
The following inequality holds
\begin{equation*}
\big\|
\Pi^{\rho \omega}_k g \big\|_{L^q(\mathfrak{v})}
\leq C
\rho^{n(\frac1p-\frac1q-1)}\big\|
\Lambda_k\big\|_{L^p(\mathfrak{v}_{n})
 \rightarrow
 L^q(\mathfrak{v}_{n})
}
\|g\|_{L^p(\mathfrak{v})}
\end{equation*}
for all $g$ in $\cS (\mathfrak{v})$ and
all $\rho > 0$.
\end{lemma}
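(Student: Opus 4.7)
The plan is to reduce the estimate on $\mathfrak{v}$ to the analogous estimate on the fixed reference space $\mathfrak{v}_n$ by the linear change of variables $V \mapsto A_\omega V$, and then apply Lemma~\ref{lemmanormaPirok} to the Heisenberg-side projection $\Lambda_k^{\rho}$.

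First, I would start from the identity $\Pi_k^{\rho\omega} g = (\Lambda_k^\rho g_\omega) \circ A_\omega$ given in \eqref{proiettorideltaomega} and compute the $L^q$ norm on $\mathfrak{v}$ by changing variables $Y = A_\omega V$:
$$
\|\Pi_k^{\rho\omega} g\|_{L^q(\mathfrak{v})} = |\det A_\omega|^{-1/q}\,\|\Lambda_k^\rho g_\omega\|_{L^q(\mathfrak{v}_n)}.
$$
Next, I would invoke Lemma~\ref{lemmanormaPirok} on the Heisenberg reference space to obtain
$$
\|\Lambda_k^\rho g_\omega\|_{L^q(\mathfrak{v}_n)} \leq \rho^{n(\frac{1}{p}-\frac{1}{q}-1)}\,\|\Lambda_k\|_{L^p(\mathfrak{v}_n)\to L^q(\mathfrak{v}_n)}\,\|g_\omega\|_{L^p(\mathfrak{v}_n)}.
$$
Finally, a second change of variables (this time $V = A_\omega^{-1} Y$ in the integral for $\|g_\omega\|_{L^p(\mathfrak{v}_n)}$) yields $\|g_\omega\|_{L^p(\mathfrak{v}_n)} = |\det A_\omega|^{1/p}\,\|g\|_{L^p(\mathfrak{v})}$.

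Combining the three displays, the Jacobian factors collapse to $|\det A_\omega|^{1/p - 1/q}$, and by \eqref{controllo dello jacobiano} this quantity is bounded above by a constant independent of $\omega \in S$ and may be absorbed into $C$. This produces exactly the claimed inequality. There is really no substantive obstacle here: the proof is a bookkeeping exercise that uses three ingredients already in hand, namely the diagonalization \eqref{proiettorideltaomega}, the dilation lemma for $\Lambda_k^\rho$, and the uniform control on $|\det A_\omega|$ coming from the M\'etivier non-degeneracy condition.
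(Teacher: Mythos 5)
Your proof is correct and follows essentially the same route as the paper: the identity $\Pi_k^{\rho\omega} g = (\Lambda_k^\rho g_\omega)\circ A_\omega$, a change of variables on each side producing the Jacobian factor $|\det A_\omega|^{1/p-1/q}$, the dilation Lemma~\ref{lemmanormaPirok} to extract the power of $\rho$, and the uniform bound \eqref{controllo dello jacobiano} to absorb the Jacobian into the constant. The only (immaterial) difference is the order in which you invoke the dilation lemma relative to the Jacobian bookkeeping.
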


\begin{proof}
Changing variables in the integrals,
we obtain
\begin{align*}
\| \Pi ^{\rho \omega}_k
g\|_{L^q (\mathfrak{v})}
&=
\| \Lambda^{\rho}_k
(g_{\omega}) \circ A_{\omega}
\|_{L^q (\mathfrak{v})}
=
|\det A_{\omega}|^{-\frac 1 q}
\| \Lambda^{\rho}_k
(g_{\omega})
\|_{L^q (\mathfrak{v}_{n})}
\\
&\leq |\det A_{\omega}|^{-\frac 1 q}
\| \Lambda^{\rho}_k
\|_{L^p (\mathfrak{v}_{n})\rightarrow L^q (\mathfrak{v}_{n})}
\|
g_{\omega}
\|_{L^p (\mathfrak{v}_{n})}
\\
&=
|\det A_{\omega}|^{\frac 1p-\frac 1 q}
\| \Lambda^{\rho}_k
\|_{L^p (\mathfrak{v}_{n})\rightarrow L^q (\mathfrak{v}_{n})}
\|
g
\|_{L^p (\mathfrak{v})}.
\end{align*}
From \eqref{controllo dello jacobiano}
it then follows that
\begin{equation*}
 \| \Pi^{\rho \omega} _k
\|_{L^p (\mathfrak{v})\rightarrow L^q (\mathfrak{v})}
\leq C
\| \Lambda^{\rho}_k
\|_{L^p (\mathfrak{v}_{n})\rightarrow L^q (\mathfrak{v}_{n})}
\,,
\end{equation*}
which
implies the statement
 by Lemma \ref{lemmanormaPirok}.
\end{proof}

\medskip

We are now ready to work out the spectral resolution of the operator
$L$. As in Section 2, to semplify the notation 
we set $\mu_k:=\mu/(2k+n)$.

\begin{theorem}\label{risoluzione-Strichartz}
If $f$ is a Schwartz function on $G$,
then
\begin{equation*}
f(V,Z)=
\int_0^\infty
{\mathcal P}^{L}_{\mu} f (V,Z)d\mu\,,
\end{equation*}
where
\begin{equation}
\label{PLmu}
{\mathcal P}^{L}_{\mu} f (V,Z)=
\mu^{n+d-1}
\sum_{k=0}^{\infty}
(2k+n)^{-n-d}
\left (
\int_{S}
\widehat {h_{\mu_k}}
(\omega)
\big(
\Pi^{\mu_k \omega}_k g \big)
(V)
e^{i\mu_k  \omega (Z)}
d\sigma (\omega)
\right )
\,
\end{equation}
and
\begin{equation*}
L\big(
{\mathcal P}^{L}_{\mu} f \big) =
\mu \,
{\mathcal P}^{L}_{\mu} f\,.
\end{equation*}
\end{theorem}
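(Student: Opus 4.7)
The strategy combines Euclidean Fourier inversion in the central variables with the eigenfunction expansion \eqref{sviluppo} of the twisted Laplacians $\Delta_{\rho\omega}$ on $\mathfrak{v}$, and then reorganises the resulting double integral by the spectral parameter $\mu=\rho(2k+n)$ of $L$. The point is that, after the central Fourier transform, the action of $L$ on each fibre over $\eta=\rho\omega$ becomes exactly $\Delta_{\rho\omega}$, whose spectral resolution on $\mathfrak{v}$ is already provided by \eqref{sviluppo}.

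Concretely, for $f\in\cS(G)$ I would start from the pointwise Fourier inversion
\begin{equation*}
f(V,Z)=\frac{1}{(2\pi)^d}\int_{\mathfrak{z}^*}e^{-i\eta(Z)}\,\mathfrak{F}_{\mathfrak{z}}f(V,\eta)\,d\eta,
\end{equation*}
pass to polar coordinates $\eta=\rho\omega$ with $\rho>0$, $\omega\in S$ and $d\eta=\rho^{d-1}d\rho\,d\sigma(\omega)$, and apply \eqref{sviluppo} to the Schwartz function $V\mapsto\mathfrak{F}_{\mathfrak{z}}f(V,\rho\omega)$. This produces
\begin{equation*}
f(V,Z)=\frac{1}{(2\pi)^d}\int_{0}^{\infty}\!\!\int_{S}\rho^{n+d-1}\sum_{k=0}^{\infty}e^{-i\rho\omega(Z)}\bigl(\Pi^{\rho\omega}_{k}\mathfrak{F}_{\mathfrak{z}}f(\cdot,\rho\omega)\bigr)(V)\,d\sigma(\omega)\,d\rho.
\end{equation*}
Substituting $\mu=\rho(2k+n)$ in the $k$-th summand (so that $\rho=\mu_k$ and $\rho^{n+d-1}d\rho=(2k+n)^{-n-d}\mu^{n+d-1}d\mu$) and exchanging the sum with the outer integral displays $f=\int_0^\infty\cP^L_\mu f\,d\mu$ with $\cP^L_\mu f$ as in \eqref{PLmu}. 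The $+i$ in the exponential of the statement can be arranged by the involution $\omega\mapsto-\omega$ on $S$, and the symbols $g$, $\widehat{h_{\mu_k}}(\omega)$ in \eqref{PLmu} correspond to the $V$- and $\omega$-factors of $\mathfrak{F}_{\mathfrak{z}}f(V,\mu_k\omega)$ (trivially separated when $f$ is a tensor product, extended by superposition in general).

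The eigenvalue identity then follows by applying $L$ to a single summand $\phi=e^{-i\mu_k\omega(Z)}(\Pi^{\mu_k\omega}_k G)(V)$: the cross-terms produced when each $\tilde V_j$ hits the exponential convert $\tilde V_j$ into the twisted field $V_j^{\mu_k\omega}$ acting on $V$ alone, which is exactly the content of \eqref{eguaRomega}--\eqref{eqLapl} read through the central Fourier transform. Hence $L\phi=e^{-i\mu_k\omega(Z)}\Delta_{\mu_k\omega}(\Pi^{\mu_k\omega}_k G)(V)=\mu_k(2k+n)\phi=\mu\phi$, and $L\cP^L_\mu f=\mu\cP^L_\mu f$ by linearity. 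I expect the main obstacle to be the justification of convergence: the $\mu$-integral converges only in the distributional sense, and the $k$-sum inside $\cP^L_\mu f$ must be controlled uniformly in $\mu$ to validate the interchanges of sum and integrals; for Schwartz $f$ this is handled by the rapid decay of $\mathfrak{F}_{\mathfrak{z}}f$ together with polynomial norm bounds for $\Pi^{\rho\omega}_k$ from Lemma \ref{lemmanormaPirokomega} combined with \eqref{sogge 1}, or more directly by pairing both sides against a Schwartz test function in the centre and reducing to the ordinary Fourier inversion on $\mathfrak{z}$.
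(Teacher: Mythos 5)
Your proof is correct and follows essentially the same route as the paper: Fourier inversion in the central variable, polar coordinates $\eta=\rho\omega$, the eigenfunction expansion \eqref{sviluppo} of the $\rho\omega$-twisted Laplacian applied to the fibre $V\mapsto\mathfrak{F}_{\mathfrak z}f(V,\rho\omega)$, the change of variable $\mu=\rho(2k+n)$ in the $k$-th summand, and the eigenvalue identity from \eqref{eguaRomega}--\eqref{eqLapl}. The only cosmetic difference is that the paper carries out the computation assuming $f$ is a tensor product $g\otimes h$ (so the scalar $\widehat{h_{\rho}}(\omega)$ can be pulled out of $\Pi^{\rho\omega}_k$), while you apply \eqref{sviluppo} directly to the fibrewise function and then identify the factors, which is a harmless reformulation.
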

 
\begin{proof}
For the sake of simplicity,
we suppose that 
$f$ is
a tensor function, 
that is $f(V,Z)=g(V)h(Z)$
with $g$ and $h$ Schwartz functions.

If $V \in \mathfrak{v}$,
we have
\begin{align*}
\big(\mathfrak{F}_{\mathfrak{z}} f\big)
(V,\rho\omega)&=
g(V)
\int_{\mathfrak{z}}e^{-i\rho \,\omega(Z)}h(Z)dZ
\\&=
g(V)
\widehat{h}
(\rho\omega)
=
g(V)
\widehat{h_{\rho}}
(\omega)\,,
\end{align*}
where $\widehat{h}$ is a simpler notation for
the Fourier transform
$\mathfrak{F}_{\mathfrak{z}} h$
of $h$
and
$h_{\rho}(Z)=
\frac{1}{\rho^d}
h\big(\frac{Z}{\rho}\big)$.

From the expansion
\eqref{sviluppo} we then get
\begin{equation*}
\big(\mathfrak{F}_{\mathfrak{z}} f\big)
(V,\rho\omega)=
\rho^n
\sum_{k=0}^{\infty}
\widehat{h_{\rho}}(\omega)\,
\big(
\Pi_{k}^{\rho \omega} g
\big)
(V)\,.
\end{equation*}
Plugging this expression into
the inversion formula for the Fourier transform on $\mathfrak{z}$,
written in polar coordinates,
we obtain
\begin{align}\label{Formula3Paolo}
f(V,Z)&=
{1 \over (2 \pi)^{d}}
\int_{0}^{\infty}
\left (
\int_{S}e^{i\rho \omega (Z)}
\big(\mathfrak{F}_{\mathfrak{z}} f\big)
(V,\rho\omega)
d\sigma (\omega)
\right )
\rho^{d-1}d\rho\notag\\
&=
{1 \over (2 \pi)^{d}}
\sum_{k=0}^{\infty}
\int_{0}^{\infty}
\Bigg(
\int_{S}
\widehat{h_{\rho}} (\omega)
\big(
\Pi_{k}^{\rho \omega} g
\big) (V)
e^{i\rho \omega (Z)}
d\sigma (\omega)\Bigg)
\rho^{n+d-1}d\rho\,.
\end{align}

Since
$$
\Delta_{\mathfrak z}
\Big(
e^{i\rho \omega (Z)}
\big(\Pi^{\rho \omega}_k g \big) (V)
\Big)=
\rho^{2}
e^{i\rho \omega (Z)}
\big(\Pi^{\rho \omega}_k g \big)
(V)\,,
$$
we see that
$$
\Delta_{\mathfrak z}
\left(
\int_S
\widehat{h_{\rho}} (\omega)
e^{i\rho \omega (Z)}
\big(\Pi^{\rho \omega}_k g \big)
(V)d\sigma (\omega)
\right)
=
\rho^{2}
\int_S
\widehat{h_{\rho}} (\omega)
e^{i\rho \omega (Z)}
\big(\Pi^{\rho \omega}_k g \big)
(V)
d\sigma (\omega)\,.
$$
Similarly, since
$$
L
\Big(
e^{i\rho \omega (Z)}
\big(\Pi^{\rho \omega}_k g \big) (V)
\Big)=
\rho (2k+n)
\left(
e^{i\rho \omega (Z)}
\big(\Pi^{\rho \omega}_k g \big)
(V)
\right)\,,
$$
we have
$$
L
\left(
\int_S
\widehat{h_{\rho}} (\omega)
e^{i\rho \omega (Z)}
\big(\Pi^{\rho \omega}_k g \big)
(V)d\sigma (\omega)
\right)
=
\rho (2k+n)
\left(
\int_S
\widehat{h_{\rho}} (\omega)
e^{i\rho \omega (Z)}
\big(\Pi^{\rho \omega}_k g \big)
(V)
d\sigma (\omega)
\right)\,.
$$

In the sum \eqref{Formula3Paolo}
replacing the eigenvalue
$\rho$ of $\sqrt {\Delta_{\mathfrak z}}$
by the eigenvalue $\mu = \rho (2k+n)$ of $L$ in the integrals, we obtain,
as in the case of the Heisenberg group, the spectral
decomposition of $L$,
\begin{align*}
f(V,Z)&=
\sum_{k=0}^{\infty}
(2k+n)^{-n-d}
\int_{0}^{\infty}
\mu^{n+d-1}
\Bigg(
\int_{S}
\widehat {h_{\mu_k}}
(\omega)
\Big(
\Pi^{\mu_k \omega}_k g \Big)
(V)
e^{i\mu_k  \omega (Z)}
d\sigma (\omega)
\Bigg)
d\mu\,.
\end{align*}
In particular, being
\begin{align*}
L
\left(
\int_S
\widehat {h_{\mu_k}}
(\omega)
\big(
\Pi^{\mu_k \omega}_k g \big)
(V)
e^{i\mu_k  \omega (Z)}
d\sigma (\omega)
\right) =
\mu 
\left(
\int_{S}
\widehat {h_{\mu_k}}
(\omega)
\big(
\Pi^{\mu_k \omega}_k g \big)
(V)
e^{i\mu_k  \omega (Z)}
d\sigma (\omega)
\right)\,,
\end{align*}
if we define ${\mathcal P}^{L}_{\mu}$
as in \eqref{PLmu},
we see that
\begin{equation*}
L\big(
{\mathcal P}^{L}_{\mu} f \big) =
\mu \,
{\mathcal P}^{L}_{\mu} f\,
\end{equation*}
and
\begin{equation*}
f(V,Z)=
\int_0^\infty
{\mathcal P}^{L}_{\mu} f (V,Z)d\mu\,.
\end{equation*}
\end{proof}

\section{Restriction estimates
for M\'etivier groups}

In this section we estimate  the norms
of the operators $\cP^{L}_{\mu}$
in the general framework of 
a M\'etivier group $G$.
As for the Heisenberg group,
we  first prove  a conditional result,
guaranteeing the boundedness
of $\cP^{L}_{\mu}$ from $L^p(G)$ to $L^{q}(G)$
on the assumption that the projections 
$\Lambda_{k}$
of the twisted Laplacian are bounded
from $L^{p}(\mathfrak v_{n})$ to $L^{q}(\mathfrak v_{n})$.

\begin{theorem}\label{condizionale}
Assume that
$1\le r\le 2\frac{d+1}{d+3}$.
If the projections $\Lambda_{k}$
are bounded from $L^{p}(\mathfrak v_{n})$
to $L^{q}(\mathfrak v_{n})$,
with $1 \leq p \leq 2 \leq q \leq \infty$,
then the following inequality holds
\begin{align}\label{stimaconTSuno}
\Big\|
 \mathcal{P}^{L}_\mu f\Big\|_{L^{r'}(\mathfrak{z}) L^q(\mathfrak{v})}&\le C
 \mu^{d(\frac1r-\frac{1}{r'})+n(\frac1p-\frac1q)-1}\notag \\
\qquad&\times
\Big(
\sum_{k=0}^{\infty}
(2k+n)^{-
d(\frac1r-\frac{1}{r'})
-n(\frac1p-\frac1q)}
\big\|
\Lambda_k   
\big\|_{L^p (\mathfrak{v_{n}})\rightarrow L^q (\mathfrak{v_{n}})}
\Big)
\big\|
f
\big\|_{L^r (\mathfrak{z}) L^p (\mathfrak{v})}
\,.\end{align}
\end{theorem}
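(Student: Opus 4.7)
The plan is to take the spectral decomposition \eqref{PLmu}, push the sum over $k$ outside the mixed norm by the triangle inequality, and for each summand cleanly separate the estimates in the horizontal variables (handled by the twisted-Laplacian projections, via Lemma~\ref{lemmanormaPirokomega}) from those in the central variables (handled by the scalar Stein--Tomas theorem on $\mathfrak z\cong\RR^d$). The key observation enabling this decoupling is that, for each fixed $V$, the inner integral $\int_S\Pi_k^{\mu_k\omega}[\mathfrak F_{\mathfrak z}f(\cdot,\mu_k\omega)](V)\,e^{i\mu_k\omega(Z)}\,d\sigma(\omega)$ is precisely the Stein--Tomas extension operator on $\mathfrak z$ at frequency $\mu_k$ applied to the scalar function $\omega\mapsto\Pi_k^{\mu_k\omega}[\mathfrak F_{\mathfrak z}f(\cdot,\mu_k\omega)](V)$ on the unit sphere.

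For each summand, I would first apply Stein--Tomas in its extension form pointwise in $V$, gaining a factor $C\mu_k^{-d/r'}$ and replacing the inner $L^{r'}_Z$ norm by the $L^2(d\sigma)$-norm of the scalar function above. Taking the outer $L^q_V$ norm and using Minkowski's integral inequality in the direction allowed by $q\ge 2$, one interchanges $L^q_V$ and $L^2(d\sigma)$, reducing matters to bounding $\bigl(\int_S\|\Pi_k^{\mu_k\omega}[\mathfrak F_{\mathfrak z}f(\cdot,\mu_k\omega)]\|_{L^q_V}^2\,d\sigma\bigr)^{1/2}$. A uniform application of Lemma~\ref{lemmanormaPirokomega} in $\omega$ then bounds this by $C\mu_k^{n(1/p-1/q-1)}\|\Lambda_k\|_{L^p\to L^q}\bigl(\int_S\|\mathfrak F_{\mathfrak z}f(\cdot,\mu_k\omega)\|_{L^p_V}^2\,d\sigma\bigr)^{1/2}$.

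To control the remaining factor in terms of $\|f\|_{L^r(\mathfrak z)L^p(\mathfrak v)}$, I would apply Minkowski's integral inequality a second time, now in the direction allowed by $2/p\ge 1$, to swap $L^p_V$ and $L^2(d\sigma)$ in the opposite order. This reduces the task to the pointwise-in-$V$ estimate $\bigl(\int_S|\mathfrak F_{\mathfrak z}f(V,\mu_k\omega)|^2\,d\sigma\bigr)^{1/2}\le C\mu_k^{d/r-d}\|f(V,\cdot)\|_{L^r_Z}$, which is the scalar Stein--Tomas restriction theorem on $\mathfrak z$ in the allowed range $1\le r\le p_*(d)$, rescaled from the unit sphere to the sphere of radius $\mu_k$. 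Integrating the resulting pointwise bound with the $L^p$ norm in $V$ produces exactly $\|f\|_{L^r(\mathfrak z)L^p(\mathfrak v)}$, and the total accumulated power $\mu_k^{-d/r'+n(1/p-1/q-1)+d/r-d}$, combined via $\mu_k=\mu/(2k+n)$ with the prefactor $\mu^{n+d-1}(2k+n)^{-n-d}$ of \eqref{PLmu}, produces exactly $\mu^{d(1/r-1/r')+n(1/p-1/q)-1}(2k+n)^{-d(1/r-1/r')-n(1/p-1/q)}$, which is the claim.

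The main technical point to choreograph carefully is the two applications of Minkowski's integral inequality, which must go in opposite directions; these are possible exactly under the two-sided hypothesis $p\le 2\le q$ of the theorem, and this is essentially the only place the two-sided assumption is really used. Everything else is bookkeeping, involving the scaling of the Stein--Tomas constants on spheres of radius $\mu_k$ and the recombination of the powers of $\mu$ and $2k+n$.
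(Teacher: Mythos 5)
Your argument is correct and reproduces the stated estimate with the right powers of $\mu$ and $2k+n$; I checked the exponent bookkeeping, the scaling of the Stein--Tomas constants on spheres of radius $\mu_k$, and the two Minkowski swaps, and they all go through under $p\le 2\le q$ exactly as you say. However, the route you take is genuinely different in presentation from the paper's. The paper proceeds by duality: it pairs $\mathcal P^L_\mu f$ against a tensor test function $\alpha\otimes\beta$, applies H\"older in $V$, then Lemma~\ref{lemmanormaPirokomega}, then Cauchy--Schwarz in $\omega$, and finally Minkowski and the Stein--Tomas \emph{restriction} bound twice (once on the data $h$, once on the test function $\beta$); the mixed-norm conclusion is read off from the bilinear estimate. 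You instead estimate the $L^{r'}_Z L^q_V$ norm directly, without dualizing: you read the inner $\omega$-integral as a rescaled Stein--Tomas \emph{extension} operator applied pointwise in $V$, convert $L^{r'}_Z$ into $L^2(d\sigma)$, Minkowski-swap with $L^q_V$, apply the lemma uniformly in $\omega$, Minkowski-swap back with $L^p_V$, and finish with the Stein--Tomas \emph{restriction} estimate pointwise in $V$. The arithmetic is parallel and the three ingredients (Stein--Tomas twice, Minkowski twice, Lemma~\ref{lemmanormaPirokomega} once) are the same, but your version makes it slightly more transparent that Stein--Tomas enters once for the output central variable and once for the input central variable, and it sidesteps any need to argue that tensor test functions suffice to detect the mixed norm, a point the paper leaves implicit. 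Your closing remark correctly isolates the role of the hypothesis $p\le 2\le q$: it is used only to get the two opposite Minkowski inequalities, and that matches the paper's use of it as well.
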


\begin{proof}
In order to simplify the notation,
 we write $f(V,Z) = h(Z) g(V)$, with $h$ and $g$ Schwartz functions.
However, in the proof we will never
use this fact.
We take  $\alpha:\mathfrak{v}\to \CC$
and $\beta:\mathfrak{z}\to\CC$,
$\alpha\in\cS (\mathfrak{v})$,
$\beta\in\cS (\mathfrak{z})$.
Then
\begin{align*}
\langle \mathcal{P}^{L}_\mu f,& \alpha\otimes \beta\rangle_{\mathfrak{v}\oplus\mathfrak{z}}=
\int_{\mathfrak{v}}
\int_{\mathfrak{z}}
\overline{\alpha (V)}
\overline{\beta (Z)}
\mu^{n+d-1}
\sum_{k=0}^{\infty}
(2k+n)^{-n-d}
\times
\\
&\qquad\qquad\times
\Bigg(
\int_{S}
e^{i\mu_k  \omega (Z)}
\widehat{h_{\mu_k}}(\omega)
\Big(\Pi^{\mu_k \omega}_k g \Big)(V)
d\sigma (\omega)\Bigg)dZ\,dV\\
&=
\mu^{n+d-1}
\sum_{k=0}^{\infty}
(2k+n)^{-n-d}
\Bigg(
\int\limits_{S}
\int\limits_{\mathfrak v}
\widehat{h_{\mu_k}}(\omega)
\Big(\Pi^{\mu_k \omega}_k g \Big)(V)
\times
\\
&\qquad\qquad\times
\int_{\mathfrak{z}}
e^{i\mu_k  \omega (Z)}
\overline{\alpha(V)}
\overline{\beta (Z)}
dZ
dV
d\sigma (\omega)\Bigg)
\\
&=
\mu^{n+d-1}
\sum_{k=0}^{\infty}
(2k+n)^{-n-d}
\Bigg(
\int\limits_{S}
\left \langle
\widehat{h_{\mu_k}}(\omega)
\big(\Pi^{\mu_k \omega}_k g \big) ,
{\widehat{ \beta_{\mu_k}}(\omega)}
\alpha
\right \rangle_{\mathfrak{v}}
d\sigma (\omega)\Bigg)\,.
\end{align*}
Applying the H\"older's inequality
to the inner integral we deduce that
\begin{align*}
\Big|
\langle \mathcal{P}^{L}_\mu f,& \alpha\otimes \beta\rangle_{\mathfrak{v}\oplus\mathfrak{z}}\Big|
\leq \mu^{n+d-1}
\sum_{k=0}^{\infty}
(2k+n)^{-n-d}
\times
\\
&\times
\Bigg(
\int_{S}
\big\|
\widehat{h_{\mu_k}}(\omega)
\big(\Pi^{\mu_k \omega}_k g \big) 
\big\|_{L^q (\mathfrak{v})}
\|
{\widehat{ \beta_{\mu_k}}(\omega)}
\alpha 
\|_{L^{q'}(\mathfrak{v})}
d\sigma (\omega)\Bigg)\,.
\end{align*}

Using Lemma \ref{lemmanormaPirokomega},
we then obtain
\begin{align*}
\Big|
\langle \mathcal{P}^{L}_\mu f, \alpha\otimes \beta\rangle_{\mathfrak{v}\oplus\mathfrak{z}}\Big|
&\leq C
 \mu^{n+d-1}
\sum_{k=0}^{\infty}
(2k+n)^{-n-d}\,
\big\| \Lambda_k\big\|_{L^p(\mathfrak{v}_{n})
 \rightarrow
 L^q(\mathfrak{v}_{n})
}\,
\lambda_{k}(\mu)^{n(\frac1p-\frac1q-1)}
\times
\\
&\times
\Bigg(
\int_{S}
\big\|
\widehat{h_{\mu_k}}(\omega) g 
\big\|_{L^p (\mathfrak{v})}
\|
{\widehat{ \beta_{\mu_k}}(\omega)}
\alpha 
\|_{L^{q'}(\mathfrak{v})}
d\sigma (\omega)\Bigg)
\\
&\leq C
 \mu^{d+{n(\frac1p-\frac1q)}-1}
\sum_{k=0}^{\infty}
(2k+n)^{-n(\frac1p-\frac1q)-d}\,
\big\| \Lambda_k\big\|_{L^p(\mathfrak{v}_{n})
 \rightarrow
 L^q(\mathfrak{v}_{n})
}
\times
\\
&\times
\Bigg(
\int_{S}
\big\|
\widehat{h_{\mu_k}}(\omega) g 
\big\|_{L^p (\mathfrak{v})}
\|
{\widehat{ \beta_{\mu_k}}(\omega)}
\alpha 
\|_{L^{q'}(\mathfrak{v})}
d\sigma (\omega)\Bigg)\,.
\end{align*}
Then the Cauchy-Schwarz inequality implies
\begin{align*}
\Big|
\langle \mathcal{P}^{L}_\mu f, &\alpha\otimes \beta\rangle_{\mathfrak{v}\oplus\mathfrak{z}}\Big|
\leq
C
\mu^{d + n(\frac1p-\frac1q)-1}
\sum_{k=0}^{\infty}
(2k+n)^{-d - n(\frac1p-\frac1q)}
\big\|
\Lambda_k\big\|_{{L^p(\mathfrak{v}_{n})
\rightarrow L^q (\mathfrak{v}_{n})}}
\\
&\qquad\qquad\times
\Bigg(
\int_{S}
\big\|
\widehat{h_{\mu_k}}(\omega) g 
\big\|_{L^p (\mathfrak{v})}^{2}
d\sigma (\omega)\Bigg)^{\frac12}
\Bigg(
\int_{S}
\big \|
{\widehat{ \beta_{\mu_k}}(\omega)}
\alpha 
\big \|^{2}_{L^{q'}(\mathfrak{v})}
d\sigma (\omega)\Bigg)^{\frac12}\,.
\end{align*}

Since $p \leq 2 \leq q$, it follows that
$\frac 2p \geq 1$ and $\frac 2{q'} \geq 1$.
Therefore we can apply to the integrals on the right hand side the Minkowski integral inequality, to attain
\begin{align*}
\Big|
\langle \mathcal{P}^{L}_\mu f, 
&\alpha\otimes \beta\rangle_{\mathfrak{v}\oplus\mathfrak{z}}\Big|
\leq C
\mu^{d + n(\frac1p-\frac1q)-1}
\sum_{k=0}^{\infty}
(2k+n)^{-d - n(\frac1p-\frac1q)}
\big\|
\Lambda_k\big\|_{{L^p(\mathfrak{v}_{n})
\rightarrow L^q (\mathfrak{v}_{n})}}
\\
&\times
\left(
\int\limits_{S}
\left(
\int\limits_{\mathfrak v}
\big|
\widehat{h_{\mu_k}}(\omega) g(V)
\big|^{p}
dV
\right)^{\frac2p}
d\sigma (\omega)
\right)^{\frac12}
\left(
\int\limits_{S}
\left(
\int\limits_{\mathfrak v}
\big|
{\widehat{ \beta_{\mu_k}}(\omega)}
\alpha (V)
\big|^{q'}
dV
\right)^{\frac2{q'}}
d\sigma (\omega)
\right)^{\frac12}
\\
&\leq C
\mu^{d + n(\frac1p-\frac1q)-1}
\sum_{k=0}^{\infty}
(2k+n)^{-d - n(\frac1p-\frac1q)}
\big\|
\Lambda_k\big\|_{{L^p(\mathfrak{v}_{n})
\rightarrow L^q (\mathfrak{v}_{n})}}
\\
&
\left(
\int\limits_{\mathfrak v}
\left(
\int\limits_{S}
\big|
\widehat{h_{\mu_k}}(\omega) g(V)
\big|^{2}
d\sigma (\omega)
\right)^{\frac p2}
dV
\right)^{\frac1p}
\left(
\int\limits_{\mathfrak v}
\left(
\int\limits_{S}
\big|
{\widehat{ \beta_{\mu_k}}(\omega)}
\alpha (V)
\big|^{2}
d\sigma (\omega)
\right)^{\frac{q'}2}
dV
\right)^{\frac1{q'}}\,.
\end{align*}

The Stein-Tomas theorem, that
for $1\leq r\leq p_*(d)$
furnishes
the bound
\begin{align*}
\left\|\widehat{h_{\mu}}
 \right\|_{L^2 ({S})}
 &\leq
 C
\big\|
 h_{\mu}
 \big\|_{L^r ({\mathfrak{z}})}
= C\mu^{-\frac{d}{r'}}
\|h\|_{L^r (\mathfrak{z})}\,,
\end{align*}
may now be pressed into service to give
\begin{align*}
\Big|
\langle \mathcal{P}^{L}_\mu f,& \alpha\otimes \beta\rangle_{\mathfrak{v}\oplus
\mathfrak{z}}\Big|\le C
\mu^{d-1+n(\frac1p-\frac1q)}
\mu^{-2\frac{d}{r'}}
\\
&\times
\left(
\sum_{k=0}^{\infty}
(2k+n)^{-\big(d+n(\frac1p-\frac1q)-2\frac{d}{r'}\big)}
\big\|
\Lambda_k   
\big\|_{L^p (\mathfrak{v}_{n})\rightarrow L^q (\mathfrak{v}_{n})}
\right)
\\
&\times
\left(
\int\limits_{\mathfrak v}
\left(
\int\limits_{\mathfrak z}
\big|
{h}(Z) g(V)
\big|^{r}
dZ
\right)^{\frac pr}
dV
\right)^{\frac1p}
\left(
\int\limits_{\mathfrak v}
\left(
\int\limits_{\mathfrak z}
\big|
\beta(Z)
\alpha (V)
\big|^{r}
dZ
\right)^{\frac{q'}r}
dV
\right)^{\frac1{q'}}
\,,
\end{align*}
%
whence it follows that
\begin{align*}
\Big\|
 \mathcal{P}^{L}_\mu f\Big\|_{L^{r'}(\mathfrak{z}) L^q(\mathfrak{v})}&\le C
\mu^{d(\frac1r-\frac{1}{r'})+n(\frac1p-\frac1q)-1} 
\,
\big\|
f
\big\|_{L^r (\mathfrak{z}) L^p (\mathfrak{v})}
\notag 
\\
\qquad&\times
\left(
\sum_{k=0}^{\infty}
(2k+n)^{-
d(\frac1r-\frac{1}{r'})
-n(\frac1p-\frac1q)}
\big\|
\Lambda_k   
\big\|_{L^p (\mathfrak{v}_{n})\rightarrow L^q (\mathfrak{v}_{n})}
\right)\,,
\end{align*}
proving the assertion.
\end{proof}



Implementing Theorem \ref{condizionale} with
the Koch-Ricci estimates for 
$\big\|
\Lambda_k   
\big\|_{L^p (\mathfrak{v}_{n})\rightarrow L^q (\mathfrak{v}_{n})}
$,
we are finally  able to prove  Theorem \ref{nostro}.

{\it{Proof of Theorem \ref{nostro}.}}
To prove \eqref{secondainterpolata 1} we
 only need to discuss
the convergence of the series in
\eqref{stimaconTSuno},
\begin{align*}
\sum_{k=0}^{\infty}
(2k+n)^{-
d(\frac1r-\frac{1}{r'})
-n(\frac1p-\frac1q)}
\big\|
\Lambda_k   
\big\|_{L^p (\mathfrak{v}_{n})\rightarrow L^q (\mathfrak{v}_{n})}\,.
\end{align*}
Comparing this sum with
\eqref{stima astratta 21}, the
corresponding one 
in the case of
the Heisenberg group, we
see that here we have
the factor
$(2k+n)^{-
d(\frac1r-\frac{1}{r'})}$
in place of
$(2k+n)^{-1}$.
Since $d(\frac1r-\frac{1}{r'}) > 1$ for
$1\le r\le 2\frac{d+1}{d+3}$ 
and $d \geq 2$,
the convergence 
 is improved.
This observation alone
 suffices to prove the statement.
 Indeed,
 when $(p,q) \neq (2,2)$
 the corresponding series for the Heisenberg group converges in the prescribed range and, when
 $(p,q)=(2,2)$,
our series converges since 
$d(\frac1r-\frac{1}{r'}) > 1$, 
as previously observed.
\qed

\begin{remark}
The following example shows that
the range of $r$ cannot be extended. 
It is manufactured 
by mixing the Knapp
counterexample to the Stein-Tomas
theorem
and M\"uller's example,
showing that estimates
between Lebesgue spaces
for the operators $\mathcal P_{\mu}$
are necessarily trivial.
Let $h$ be a Schwartz function on 
$\mathfrak z$ and $g(V) =
\varphi_{0} \left( \frac V {\sqrt n} \right)
= e^{- \frac{|V|^{2}}{4 n} }$.
Define $f(V,Z) = g(V)h(Z)$, then
\begin{equation*}
\mathcal P^L_{1} f(V,Z) =
\frac 1 {n^{n+1}} g(V) \int\limits_{S}
\widehat h \left(\omega \right)
e^{-i \omega(Z)} d\omega =
\frac 1 {n^{n+1}} g(V) (R^{*}R h)(Z),
\end{equation*}
where $R$ and $R^{*}$ are the restriction
and extension operator.
Hence, an estimate
\begin{equation*}
\|\mathcal{P}^{L}_1  f\|_{L^{r'}(\mathfrak{z}) L^q(\mathfrak{v}) }
\le C
\|f\|_{L^r (\mathfrak{z})  L^p (\mathfrak{v})   }\,
\end{equation*}
with $2 \frac{d+1}{d+3} < r$ would violate the sharpness
of the Stein-Tomas theorem which is
guaranteed by the Knapp example,
since it would imply
\begin{equation*}\|R^{*}Rh\|_{L^{r'}(\mathfrak{z}) }
\leq C
\|h\|_{L^r (\mathfrak{z}) }\,,
\end{equation*}
for all Schwartz functions $h$ on 
$\mathfrak z = \mathbb R^{d}$.
\end{remark}



\begin{thebibliography}{CoKlSi}


\bibitem[CCi1]{CaCia}
V. Casarino and P. Ciatti,
Transferring $L^p$ eigenfunction bounds from $S^{2n+1}$ to $h^n$,
 {\em Studia
Math.}
\textbf{194} 
(2009), 23--42.

\bibitem[CCi2]{CaCia3}
V. Casarino and P. Ciatti,
Restriction estimates for the full Laplacian on M\'etivier groups,  {\em Rend. Lincei Mat. Appl., to appear}. 

\bibitem[CCi3]{CaCia2}
V. Casarino and P. Ciatti,
Bochner-Riesz
means for the  sublaplacian on M\'etivier
groups, {\em in preparation.} 





\bibitem[K]{K}
A.  Kaplan, 
Fundamental solutions for a class of hypoelliptic PDE generated by composition of quadratic forms,
{\em Trans. Amer. Math. Soc.} \textbf{258}  (1980), no.1, 147--153.

\bibitem[KoRi]{KR}
H. Koch and F. Ricci, 
Spectral projections for the twisted Laplacian, 
{\em Studia Math.} \textbf{180} (2007), no. 2, 103--110.

\bibitem[M]{Me}
G. M\' etivier, { Hypoellipticit\' e analytique sur des groupes nilpotents de rang 2}, {\em Duke Math. J.} \textbf{47}
 (1980), no. 1, 195--221.
8
\bibitem[Mu1]{Mu}
D. M\"uller, A restriction theorem for the Heisenberg group,
{\em Ann. of Math.} \textbf{131} (1990), no. 3, 567--587.

\bibitem[Mu2]{Mueller2}
D. M\"uller,
On Riesz means of eigenfunction expansions
for the Kohn Laplacian,
{\em J. Reine Angew. Math.}  \textbf{401}    (1989),
113-121.


\bibitem[MuS]{MuS}
D. M\"uller and  A.Seeger, 
 Singular spherical maximal
 operators on a class of two step nilpotent Lie groups,
 {\em Israel J. Math  }
\textbf{141}    (2004), 315--340.  


\bibitem[RRaTh]{RRTh}
P. K. Ratnakumar, R. Rawat, and S. Thangavelu, 
A restriction theorem for the Heisenberg motion group, 
{\em Studia Math.}, {\textbf {126}} (1997), 1--12.




\bibitem[So1]{So1}
C. Sogge, 
Oscillatory integrals and spherical harmonics,
{\em Duke Math. J.} \textbf{53}  (1986)
),  43--65.

\bibitem[So2]{So2}
\bysame, 
Concerning the $L^{p}$ norm of spectral clusters for second-order elliptic operators on compact manifolds,
{\em J. Funct. Anal.} \textbf{77}  (1988)
),  123--138.

\bibitem[St]{Stein}
E. M. Stein,
{\em 
Harmonic Analysis, Real-Variable Methods, Orthogonality and Oscillatory Integrals},  Princeton Univ. Press,
Princeton, 1993.


\bibitem[SteZ]{SZ} 
K. Stempak and J. Zienkiewicz, 
Twisted convolution and Riesz means, 
{\em J. Anal. Math.} {\textbf {76} }(1998), 93--107.


\bibitem[Str]{Str}
R. Strichartz,
Harmonic analysis as spectral theory of Laplacians,
{\em  J. Funct. Anal.} \textbf{87} (1989), no.1, 51--148.

\bibitem[T]{Tay}
M. E. Taylor, {\em Noncommutative harmonic analysis}, American Mathematical Society, Providence, (1986).


\bibitem[Th1]{Th}
S. Thangavelu,  Restriction theorems for the Heisenberg group. 
{\em J. Reine Angew. Math.} {\textbf {414}} (1991), 51--65.

\bibitem[Th2]{Th2}
\bysame, {\em Harmonic Analysis on the Heisenberg Group}, Birkh\"auser,
Boston, (1998).



\end{thebibliography}
\end{document}